 \newtheorem{algorithm}[theorem]{Algorithm}
\newcommand{\beq}{\begin{equation}}
\newcommand{\eeq}{\end{equation}}
\newcommand{\Config}[1]{\mathrm{Config}\left( #1 \right)}
\newcommand{\Stable}[1]{\mathrm{Stable}\left( #1 \right)}
\newcommand{\StoRec}[1]{\mathrm{StoRec}\left( #1 \right)}
\newcommand{\DetRec}[1]{\mathrm{DetRec}\left( #1 \right)}
\newcommand{\DRPoly}[1]{\mathrm{DRPoly}_{#1}}
\newcommand{\level}{\mathrm{level}}
\newcommand{\Confign}{\mathrm{Config}_n}
\newcommand{\Stablen}{\mathrm{Stable}_n}
\newcommand{\StoRecn}{\mathrm{StoRec}_n}
\newcommand{\DetRecn}{\mathrm{DetRec}_n}
\newcommand{\PSR}[1]{\mathrm{PartStoRec}_n^{(#1)}}
\newcommand{\OO}{\mathcal{O}}
\newcommand{\In}[1]{\mathrm{in}^{\OO}_{#1}}
\newcommand{\Out}[1]{\mathrm{out}^{\OO}_{#1}}
\newcommand{\orighta}{\xrightarrow{\OO}}
\newcommand{\olefta}{\xleftarrow{\OO}}
\newcommand{\dgr}[1]{\mathrm{deg}_{#1}}
\newcommand{\inc}[1]{{#1}^{\mathrm{inc}}}
\newcommand{\DT}{\mathrm{DetTopp}}
\newcommand{\DetStab}{\mathrm{DetStab}}
\newcommand{\ST}{\mathrm{StoTopp}}
\newcommand{\StoStab}{\mathrm{StoStab}}
\newcommand{\PF}[1]{\mathrm{PF}_{#1}}
\newcommand{\PFPoly}[1]{\mathrm{PFPoly}_{#1}}
\newcommand{\PermPoly}[1]{\mathrm{PermPoly}_{#1}}
\newcommand{\maxi}{\mathrm{max}}
\newcommand{\nextMax}{\mathrm{nextMax}}
\newcommand{\maxc}{c^{\max}}
\newcommand{\R}{\mathbb{R}}
\newcommand{\Z}{\mathbb{Z}}
\newcommand{\Zp}{\mathbb{Z}_+}
\newcommand{\N}{\mathbb{N}}
\title{The stochastic sandpile model on complete graphs}
\author{Thomas Selig
}
\email{Thomas.Selig@xjtlu.edu.cn}).}
\begin{document}

\maketitle

\begin{abstract}

The stochastic sandpile model (SSM) is a generalisation of the standard Abelian sandpile model (ASM), in which topplings of unstable vertices are made random. When unstable, a vertex sends one grain to each of its neighbours independently with probability $p \in (0,1)$. We study the SSM on complete graphs. Our main result is a description of the recurrent states of the model. We show that these are given by convex sums of recurrent states for the ASM. This allows us to recover a well-known result: that the number of integer lattice points in the $n$-dimensional permutation polytope is equal to the number of labeled spanning forests on $n$ vertices. We also provide a stochastic version of Dhar's burning algorithm to check if a given (stable) state is recurrent or not, which runs in linear time. Finally, we study a family of so-called \emph{partial} SSMs, in which some vertices topple randomly, while others topple deterministically (as in the ASM, sending one grain to all neighbours). We show that this distinction is meaningful, yielding sets of recurrent states that are in general different from those of both the ASM and SSM. We also show that to get all recurrent states of the SSM, we can allow up to two vertices to topple deterministically.
\end{abstract}


\section{Introduction}\label{sec:intro}

The Abelian sandpile model (ASM) was introduced by Bak, Tang and Wiesenfeld in the late 1980's~\cite{BTW1, BTW2} as an example of a model exhibiting a phenomenon known as \emph{self-organised criticality}. This phenonemon describes systems which self-tune themselves towards some critical state, without the need for external modification of their parameters. The model was then generalised and formalised a few years later by Dhar~\cite{Dhar1}.

The ASM is a random process on a graph, where vertices are assigned a number of grains of sand (a non-negative integer). At each unit of time, a grain is added to a randomly selected vertex of the graph. If this causes the number of grains to exceed a certain threshold (usually the degree of the vertex), that vertex is said to be \emph{unstable}, and \emph{topples}, sending one grain to each of its neighbours in the graph. A special vertex called the \emph{sink} absorbs grains (never topples), and so the process eventually stabilises.

Of particular interest in this model are the so-called \emph{recurrent states}, which are states that appear infinitely often in the long-time running of the model. While there are a number of ways to check if a given state is recurrent or not (Theorem~\ref{thm:DR_general} gives four of these), computing the set of all recurrent states is in general a difficult question, both algorithmically and combinatorially. As such, a fruitful direction of ASM research has been to instead focus on certain graph families with high levels of symmetry or structure, on which the set of recurrent states can be more easily studied and computed.

The seminal example of such a study is due to Cori and Rossin~\cite{CR}, who showed that on complete graphs, the set of recurrent states is in bijection with the set of \emph{parking functions} (see Section~\ref{subsec:parking_functions} for a definition of these, and Theorem~\ref{thm:detrec_pf} for a statement of the bijection). Similar combinatorial studies on many other graph families -- such as complete bipartite~\cite{DLB} and multi-partite~\cite{CorPou} graphs, complete split graphs~\cite{Duk}, wheel and fan graphs~\cite{SelWheel}, Ferrers graphs~\cite{DSSS1}, permutation graphs~\cite{DSSS2}, and so on -- followed. Related combinatorial objects include generalisations of parking functions, parallelogram polyominoes, Motzkin words, subgraphs of cycles, lattice paths, decorated EW-tableaux, tiered trees, and a number of others.

In the ASM, the only randomness lies in the choice of vertex where grains are added at each time step, together with possibly the randomness of the initial state. After this, the toppling and stabilisations processes are entirely deterministic. Stochastic variants of the ASM add an extra layer of randomness to the model by making topplings random. When a vertex is unstable, it chooses a random (multi-)subset of its neighbours to send grains to according to some probability distribution. There are a number of stochastic variants of the ASM in the literature, including the following.
\begin{itemize}
\item In~\cite{Manna}, there are two different types of grain, which cannot occupy the same vertex. When they do, one of the grains is moved to a randomly chosen neighbouring vertex instead.
\item In~\cite{DharSad}, unstable vertices lose a fixed number grains, which are re-distributed at random to their neighbours: some neighbours may receive more than one grain, others none, while some grains may exit the system directly.
\item In~\cite{CMS}, unstable vertices flip a biased coin for each neighbour to decide which neighbours to send grains to. That is, all neighbours independently of each other receive a grain with probability $p  \in (0,1)$ (with probability $(1-p)$ that grain is kept by the toppling vertex).
\item The model in~\cite{Nunzi} generalises the two previous models in~\cite{CMS, DharSad}. In this model, toppling vertices send grains to a random multi-set of neighbours, with the total number of grains lost being itself random.
\item In~\cite{KW}, the toppling threshold of a vertex is set to a (fixed) multiple $M$ of its degree. For each toppling, a random number $\gamma \in \{1,\ldots,M\}$ is chosen, and each neighbour of the toppling vertex receives the same (random) number $\gamma$ of grains.
\end{itemize}

In general, these models have not been studied as widely as the ASM. The existing research has mainly focused on the physical properties of the models such as the steady state distribution, and no real combinatorial studies have been conducted. This paper proposes the first such combinatorial study. We focus on the ``coin-flipping'' model from~\cite{CMS}, which we call the stochastic sandpile model (SSM). We choose this model because it has known characterisations of recurrent states, i.e.\ conditions under which a given state is recurrent (see Theorem~\ref{thm:SR_general} in this paper). We study the SSM on complete graphs, seeking a combinatorial description of its set of recurrent states in the spirit of that of Cori and Rossin for the ASM~\cite{CR}.

Our paper is organised as follows. In Section~\ref{sec:prelim} we introduce the necessary definitions, tools and notation for our study. This includes formal definitions of the ASM and SSM, and characterisations of their recurrent states (Theorems~\ref{thm:DR_general} and \ref{thm:SR_general}). We also introduce various notions such as orientations, parking functions, and polytopes, which will be needed later in the paper. 
In Section~\ref{sec:sandpile_complete} we focus on the sandpile model (both Abelian and stochastic) on complete graphs. We re-state the characterisation theorems mentioned above in terms of complete graphs, and this allows us to define a stochastic burning algorithm that establishes in log-linear time whether a given state is recurrent for the SSM or not (Theorem~\ref{thm:sto_burning}). 
In Section~\ref{sec:main_result} we state and prove our main combinatorial description of recurrent states for the SSM on complete graphs (Theorem~\ref{thm:SR_DRPoly}), namely that these are given by integer lattice points in the convex polytope of the set of recurrent states for the ASM.
In Section~\ref{sec:partial_ssm} we introduce a concept of \emph{partially} stochastic sandpile models in which some vertices topple according to the SSM rules, while others topple according to the ASM rules. We show that the recurrent states for these models are in general distinct to those of both the ASM and SSM (Propositions~\ref{pro:part_sto_non_det} and \ref{pro:part_sto_non_sr}), and that if at most two vertices topple according to the ASM rules, we in fact get all the recurrent states of the SSM (Theorem~\ref{thm:par_sto_rec_all_sto_rec}).
Finally, Section~\ref{sec:conc} summarises our main results, and lists some potential directions of future research.


\section{Preliminaries}\label{sec:prelim}

In this section, we introduce some of the necessary definitions, tools and notation we will need and use throughout the rest of the paper. As usual, the sets $\Z$ and $\N$ denote the sets of integers and (strictly) positive integers respectively. We let $\Zp:=\N \, \cup \, \{0\}$ denote the set of non-negative integers. For a positive integer $n \in \N$, we denote $[n]$, resp.\ $[n]_0$, the set $\{1,\ldots,n\}$, resp.\ $\{0,\ldots,n\}$. For a vector $v = (v_1, \ldots, v_n) \in \R^n$, we write $\inc{v} = (\inc{v}_1,\ldots,\inc{v}_n)$ for the non-decreasing rearrangement of $v$.

Throughout this section, a \emph{graph} $G$ is a labelled, undirected, connected graph with vertex set $[n]_0$. The edge set $E$ is finite, and may contain multiple edges, but no loops. We call vertex $0$ the \emph{sink} of the graph $G$. For $i \in [n]_0$, we write $\dgr{i} = \dgr{i}^G$ for the degree of the vertex $i$ in $G$, omitting the superscript where the underlying graph is unambiguous. For a subset of vertices $A \subseteq [n]_0$, we denote $G[A]$ the induced subgraph of $G$ on $A$, that is the graph with vertex set $A$ and edge set the set of edges of $G$ with both endpoints in $A$. The complete graph $K_n^0$ is the graph where there is one edge between any two distinct vertices $i,j \in [n]_0$.

\subsection{Orientations}\label{subsec:orient}

An \emph{orientation} $\OO$ of a graph $G$ is the assignment of a direction to each edge in $E$. If $E$ contains a multiple edge, a direction is assigned to each of its copies. For an orientation $\OO$, and an edge $e=\{i,j\} \in E$, we write $i \orighta j$ to denote that the edge $e$ is directed from $i$ to $j$ in $\OO$. For $i \in [n]_0$, we denote $\In{i}$, resp.\ $\Out{i}$, the number of incoming, resp.\ outgoing, edges at $i$ in $\OO$.

An orientation is \emph{acyclic} if it contains no directed cycles. A \emph{root}, resp.\ \emph{source}, of an orientation is a vertex where all edges are incoming, resp.\ outgoing. It is straightforward to verify that an acyclic orientation contains at least one root and one source. An orientation is said to be \emph{sink-rooted} if it contains exactly one root, the sink $0$. A \emph{cycle-flip} refers to the act of changing the directions of all edges in a directed cycle, leaving other edge directions unchanged.

\begin{definition}\label{def:equiv_orient}
We say that two orientations $\OO$ and $\OO'$ of $G$ are \emph{score-equivalent} if
\beq\label{eq:equiv_orient_def}
\forall\, i \in [n]_0, \, \In{i} = \mathrm{in}^{\OO'}_i.
\eeq
\end{definition}

The following is stated in equivalent form in \cite[Lemma 1]{Felsner}.

\begin{proposition}\label{pro:equiv_orient_cycle_flip}
Two orientations $\OO$ and $\OO'$ are score-equivalent if, and only if, the orientation $\OO'$ can be obtained from $\OO$ through a series of cycle-flips.
\end{proposition}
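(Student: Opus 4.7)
The plan is to handle the two directions separately, with the "if" direction being a quick degree computation and the "only if" direction requiring a small inductive argument on the number of disagreeing edges.

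For the "if" direction, it suffices to check that a single cycle-flip preserves the score vector; iterating then handles any sequence of cycle-flips. Let $C$ be a directed cycle in $\OO$ and let $\OO''$ be obtained by flipping $C$. For a vertex $i \notin C$, no incident edges change direction, so $\In{i} = \mathrm{in}^{\OO''}_i$. For $i \in C$, vertex $i$ has exactly one edge of $C$ entering and one leaving in $\OO$; after the flip these two roles are swapped, and again $\In{i} = \mathrm{in}^{\OO''}_i$. So score is preserved.

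For the "only if" direction, I would induct on $d(\OO,\OO') := |\{e \in E : e \text{ is oriented differently in } \OO \text{ and } \OO'\}|$. If $d(\OO,\OO')=0$ the two orientations coincide and there is nothing to do. Otherwise, let $H$ be the spanning subgraph of $G$ consisting of the edges on which $\OO$ and $\OO'$ disagree, and orient each edge of $H$ as in $\OO$. The key observation is that in this directed subgraph, every vertex has in-degree equal to out-degree. Indeed, split the edges incident to $i$ into those on which $\OO$ and $\OO'$ agree (set $A_i$) and those on which they disagree (set $D_i$, the edges of $H$ at $i$). Because $A_i$ contributes identically to $\In{i}$ and $\mathrm{in}^{\OO'}_i$, the equality $\In{i}=\mathrm{in}^{\OO'}_i$ from \eqref{eq:equiv_orient_def} forces the number of edges of $D_i$ entering $i$ in $\OO$ to equal the number entering $i$ in $\OO'$. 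But an edge of $D_i$ enters $i$ in $\OO'$ precisely when it leaves $i$ in $\OO$, so the in-degree and out-degree of $i$ in $H$ (oriented via $\OO$) coincide.

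A finite directed graph in which every vertex has in-degree equal to out-degree and which has at least one edge must contain a directed cycle: starting from any vertex with positive degree and repeatedly following outgoing edges, we can never get stuck, so by finiteness some vertex is revisited, yielding a directed cycle $C$ in $H$. Flipping $C$ in $\OO$ produces an orientation $\OO_1$ that agrees with $\OO'$ on all edges of $C$ (since these edges were precisely the disagreement edges, now reversed) and on all edges where $\OO$ already agreed with $\OO'$. Hence $d(\OO_1,\OO') = d(\OO,\OO') - |C|$, and by the first direction $\OO_1$ is still score-equivalent to $\OO$ and thus to $\OO'$. The inductive hypothesis finishes the argument.

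The only real subtlety is the existence of a directed cycle inside $H$, which is exactly why one needs to restrict to the disagreement subgraph rather than all of $G$: it is the balancing condition $d_i^+ = d_i^-$ on $D_i$, forced by score-equivalence, that guarantees a directed cycle lives entirely in the edges one wants to flip. Everything else is bookkeeping.
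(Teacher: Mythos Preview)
Your argument is correct. The paper does not actually supply its own proof of this proposition; it simply records the statement and attributes it to \cite[Lemma~1]{Felsner}. The argument you give---checking that a single cycle-flip preserves in-degrees for the ``if'' direction, and for the ``only if'' direction showing that the subgraph of disagreement edges is balanced (each vertex has equal in- and out-degree when oriented via $\OO$), hence contains a directed cycle that can be flipped to strictly reduce the number of disagreements---is the standard proof, and matches what one finds in the cited reference.

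One very minor point of presentation: in the cycle-finding step, when you say ``we can never get stuck'', it is worth being explicit that at each newly reached vertex the balance condition guarantees an outgoing edge (since an incoming edge was just used to arrive there), and that revisiting a vertex yields a directed closed walk from which a directed cycle is extracted. You essentially say this, but since the graph may have multiple edges it is good to note that a length-$2$ cycle (a pair of parallel edges oriented oppositely) is a legitimate directed cycle here and its flip is well-defined.
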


Note that as a consequence, if $\OO$ is acyclic, its score-equivalence class contains only itself.

\subsection{The Abelian sandpile model}\label{subsec:ASM_prelim}

In this part we introduce the Abelian sandpile model (ASM) on a graph, and recall some important results regarding the so-called \emph{recurrent states} of the model. Let $G$ be a graph with vertex set $[n]_0$ where $0$ is the sink, and edge set $E$.

A \emph{configuration} on $G$ is a vector $c=(c_1,\ldots ,c_n) \in \Zp^n$ that assigns the number $c_i$ to vertex $i$. 
We think of $c_i$ as representing the number of grains of sand at the vertex $i$. 
Note that the sink vertex $0$ is not assigned a number of grains.
Denote by $\Config{G}$ the set of all configurations on $G$.
Let $\alpha_i \in \Zp^n$ be the vector with $1$ in the $i$-th position and $0$ elsewhere. By convention, $\alpha_0 = (0, \ldots, 0)$ is the all-0 vector.

We say that a vertex $i$ in a configuration $c=(c_1,\ldots ,c_n)\in \Config{G}$ is \emph{stable} if $c_i < \dgr{i}^G$, and \emph{unstable} otherwise. 
A configuration is called stable if all its non-sink vertices are stable (otherwise it is unstable), and we denote $\Stable{G}$ the set of all stable configurations on $G$. We also define $\maxc$ to be the \emph{maximal stable configuration} on $G$, i.e.\ $\maxc_i = \dgr{i}^G - 1$ for all $i \in [n]$. The terminology ``maximal'' here means that adding a grain to any vertex in $\maxc$ would result in an unstable configuration.

Unstable vertices may topple. 
We define the \emph{deterministic toppling operator} $\DT_i$, corresponding to the toppling of an unstable vertex $i\in [n]$ in a configuration $c \in \Config{G}$, by:
\beq\label{eq:full_toppling}
\DT_i(c) := c - \dgr{i} \cdot \alpha_i + \sum_{j: \{i,j\} \in E} \alpha_j,
\eeq
where the sum is over all vertices $j$ adjacent to $i$ in $G$, counted with multiplicity. In words, the deterministic toppling of a vertex $i$ sends one grain along each edge incident to $i$ (with multiplicity), with the grains then being received by the neighbouring vertices.

Performing this toppling may cause other vertices to become unstable, and we topple these in turn. 
One can show (see e.g.\ \cite{Dhar}) that starting from some unstable configuration $c$ and successively toppling unstable vertices, we eventually reach a stable configuration $c'$ (we think of the sink as absorbing grains). 
Moreover, this configuration $c'$ does not depend on the sequence in which vertices are toppled. 
We write $c' = \DetStab(c)$ and call it the \emph{deterministic stabilisation} of $c$.

\begin{remark}\label{rem:det_sto_diff}
The qualifier \emph{deterministic} for toppling operators and stabilisation is used here to distinguish the ASM from its stochastic variant introduced in Section~\ref{subsec:SSM_prelim}. For this variant, we will talk instead of \emph{stochastic} topplings and stabilisation.
\end{remark}

We now define a Markov chain on the set of stable configurations $\Stable{G}$.
Let $\mu=(\mu_1,\dots,\mu_n)$ be a probability distribution on $[n]$ such that $\mu_i>0$ for all $i \in [n]$. 
At each step of the Markov chain we add a grain at the vertex $i$ with probability $\mu_i$ and (deterministically) stabilise the resulting configuration.
Formally the transition matrix $Q$ is given by:
\beq\label{eq:transition_matrix}
\forall\, c,c' \in \Stable{G}\!, \,  Q(c,c')=\sum\limits_{i=1}^n \mu_i \mathds{1}_{\DetStab \left(c+\alpha_i \right) \, = \, c'}.
\eeq

The recurrent states for the Markov chain are the set of configurations which appear infinitely often in the long-time running of the model. In the spirit of Remark~\ref{rem:det_sto_diff}, we call the recurrent states of the ASM \emph{deterministically recurrent} (DR), and let $\DetRec{G}$ be the set of DR states on the graph $G$. Since $\mu_i >0$ for all $i \in [n]$, it is clear that the maximal stable configuration $\maxc$ is DR, and that the Markov chain is irreducible.

Given a configuration $c = (c_1,\ldots,c_n) \in \Config{G}$ and an orientation $\OO$ of $G$, we say that $\OO$ and $c$ are \emph{compatible} if
\beq\label{eq:comp_orient_config}
\forall\, i \in [n],\, c_i \geq \In{i}.
\eeq
Note that if $\OO$ and $c$ are compatible, and $\OO'$ is an orientation of $G$ which is score-equivalent to $\OO$, then $\OO'$ and $c$ are also compatible. If $\OO$ and $c$ are compatible, we will sometimes say that $\OO$ is compatible with $c$, or simply that $\OO$ is compatible if there is no ambiguity over which configuration $c$ is considered.

The study of DR states is of central importance in ASM research. In Theorem~\ref{thm:DR_general} we give four equivalent characterisations of DR states.
\begin{enumerate}
\item The first is a simple Markov chain result, stating that recurrent states can be reached from the maximal configuration through a series of grain additions and deterministic stabilisations. The formulation used in the theorem stems from the remark that in this sequence, we can put all the grain additions first, and then it remains to effect just one stabilisation.
\item The second is in terms of compatible \emph{acyclic orientations}. This characterisation was first stated in these terms by Biggs \cite{Biggs}, although the author credits a previous paper \cite{GZ} as having equivalent results.
\item The third is in terms of \emph{forbidden subconfigurations}, essentially configurations on a strict subgraph of $G$ which remain stable (see \cite{Red}).
\item The fourth is the famous \emph{burning algorithm}, due to Dhar~\cite{Dhar}, which provides a straightforward algorithmic process to check if a given (stable) configuration is DR or not.
\end{enumerate}

That Characterisations (3) and (4) are equivalent is fairly straightforward, but we choose to give both here to foreshadow our results on the SSM. First, let us describe Dhar's burning algorithm. This is a process which burns vertices of $G$ until either no vertices are left, or a forbidden subconfiguration is reached. \emph{Burning} a vertex of a graph means removing that vertex and all its incident edges. Dhar's algorithm can be described as follows.

\begin{algorithm}[Dhar's burning algorithm]\label{algo:Dhar_burning}
Input: a stable configuration $c \in \Stable{G}$.
\begin{itemize}
\item \textbf{Step 1}. Burn the sink vertex $0$. Let $\mathrm{Remain} = [n]_0 \setminus \{0\} = [n]$ be the set of remaining (unburned) vertices, and $G[\mathrm{Remain}]$ be the induced subgraph of $G$ on the set of remaining vertices.
\item \textbf{Step 2}. While there exists $i \in \mathrm{Remain}$ such that $c_i \geq \dgr{i}^{G[\mathrm{Remain}]}$, burn vertex $i$, setting $\mathrm{Remain} = \mathrm{Remain} \setminus \{i\}$, and repeat.
\item \textbf{Step 3}. Output the set of remaining vertices $\mathrm{Remain} = \mathrm{Remain}(c)$.
\end{itemize}
\end{algorithm}

Traditionally, in Step~2, one chooses to burn the minimal $i$ satisfying the inequality, but the output does not depend on this choice in any way, so we do not specify it here. We can now state the four equivalent characterisations of DR states referred to above.

\begin{theorem}\label{thm:DR_general}
Let $c = (c_1,\ldots,c_n) \in \Stable{G}$ be a \emph{stable} configuration on the graph $G$. Then $c$ is DR if, and only if, one of the following four equivalent conditions holds.
\begin{enumerate}[label=(\roman*), itemsep=1pt, topsep = 5pt]
\item There exists a configuration $d \in \Config{G}$ such that $\DetStab(\maxc + d) = c$.
\item There exists an \emph{acyclic, sink-rooted} orientation $\OO$ of $G$ compatible with $c$.
\item For every subset $A \subseteq [n]$, there exists a vertex $i \in A$ such that $c_i \geq \dgr{i}^{G[A]}$.
\item Dhar's burning algorithm burns all vertices of $G$, i.e. outputs $\mathrm{Remain}(c) = \emptyset$.
\end{enumerate}
\end{theorem}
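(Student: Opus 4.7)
The plan is to establish all four characterizations as equivalent to DR-ness via a cyclic chain of implications, combining Markov chain theory, the Abelian property of sandpile stabilization, and combinatorial arguments on orientations.

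First, $c$ is DR if and only if (i). Set $R := \{\DetStab(\maxc + d) : d \in \Config{G}\}$. The Abelian property yields $\DetStab(\DetStab(\maxc+d) + \alpha_i) = \DetStab(\maxc + d + \alpha_i)$, so $R$ is closed under the chain dynamics. It contains $\maxc$ (take $d = 0$), is communicating from $\maxc$ (add the components of $d$ one grain at a time, each with positive probability since $\mu_i > 0$), and is reached from any starting state by adding sufficiently many grains at every vertex. Hence $R$ is the unique recurrent class, so $\DetRec{G} = R$.

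Next, (iii) $\Leftrightarrow$ (iv) is immediate from Dhar's algorithm: applying (iii) to $A = \mathrm{Remain}$ at each step supplies a burnable vertex, and conversely the first vertex of any $A \subseteq [n]$ burned in a successful run witnesses (iii) for $A$, since $\mathrm{Remain} \supseteq A$ at that moment. For (iv) $\Leftrightarrow$ (ii), given a successful burning order $v_0 = 0, v_1, \ldots, v_n$, orient each edge $\{v_i, v_j\}$ with $i < j$ from $v_j$ to $v_i$. This orientation is acyclic (indices strictly decrease along arrows); sink-rooted (combining the burning condition $c_{v_i} \geq \dgr{v_i}^{G(\{v_i,\ldots,v_n\})}$ with stability $c_{v_i} < \dgr{v_i}^G$ forces $v_i$ to have a neighbor in $\{v_0,\ldots,v_{i-1}\}$, hence an outgoing edge); and compatible with $c$ (since $\In{v_i} = \dgr{v_i}^{G(\{v_i,\ldots,v_n\})}$). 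For the reverse, given an acyclic sink-rooted $\OO$ compatible with $c$ and any $A \subseteq [n]$, the restriction of $\OO$ to $G(A)$ is an acyclic sub-DAG and hence has a root $i \in A$, giving $c_i \geq \In{i} \geq \dgr{i}^{G(A)}$.

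To close the loop, (ii) $\Rightarrow$ (i) by a sink-firing style argument. Let $d$ be the vector whose $i$-th entry is the number of edges from $i$ to the sink, and choose a reverse topological order $v_0 = 0, v_1, \ldots, v_n$ of $\OO$ (so every edge of $\OO$ runs from a higher-indexed to a lower-indexed vertex). By induction on $i$, using compatibility and the observation that in $\OO$ every edge between $v_i$ and a vertex of $\{v_0,\ldots,v_{i-1}\}$ is oriented outgoing from $v_i$, one verifies that starting from $c + d$ and successively toppling $v_1, v_2, \ldots, v_n$ each exactly once is a valid stabilization sequence returning to $c$. Indeed, the count of $v_i$ at the moment of its toppling works out to $c_{v_i} + (\dgr{v_i} - \In{v_i}) \geq \dgr{v_i}$. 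Hence $\DetStab(c + d) = c$, which makes $c$ returnable in the chain and thus recurrent; by the first step, $c \in R$, giving (i).

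The main obstacle is the remaining implication from DR to one of (ii)--(iv). The standard route is to show that if $c = \DetStab(\maxc + d)$ then (possibly after scaling $d$ via the Abelian property) every non-sink vertex topples at least once during the stabilization, and the order of last topplings yields a compatible acyclic sink-rooted orientation, giving (ii). This step is the most delicate and requires care to exclude a ``forbidden subconfiguration'' obstruction; an alternative is to establish (iii) directly by choosing, for a given $A \subseteq [n]$, either a vertex of $A$ that never topples (which carries its initial near-maximal count) or, failing that, the last vertex of $A$ to topple, and bounding its final count from below via the property that its count just before its last toppling was at least $\dgr{i}$.
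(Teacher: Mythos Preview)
The paper itself does not prove this theorem; it records the four characterisations as known results with pointers to the literature (Dhar, Biggs, etc.), so there is no paper proof to compare against.

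Your argument correctly handles DR $\Leftrightarrow$ (i), the cycle (ii) $\Leftrightarrow$ (iii) $\Leftrightarrow$ (iv), and your first sketch for (i) $\Rightarrow$ (ii) via the order of last topplings is sound (when $d\neq 0$, connectivity already forces every non-sink vertex to topple in $\DetStab(\maxc+d)$, so the ``scaling'' caveat is unnecessary). The real gap is the reverse direction (ii) $\Rightarrow$ (i). You correctly derive $\DetStab(c+d)=c$ with each vertex toppling once, but then assert ``returnable in the chain and thus recurrent''. That inference is invalid: in a finite Markov chain a transient state can have positive return probability. What you have actually established is Dhar's fixed-point criterion, and showing that every fixed point of $c'\mapsto\DetStab(c'+d)$ lies in $R=\{\DetStab(\maxc+d'):d'\ge 0\}$ is precisely the missing content. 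The standard completions proceed either by a counting argument (both $R$ and the set of configurations passing the burning test have cardinality $\det L$) or via the sandpile group; you supply neither. As a secondary point, your alternative sketch for (i) $\Rightarrow$ (iii) selects the wrong witness: taking the \emph{last} vertex $j\in A$ to topple only gives $c_j\ge 0$ after its final toppling, since any grains it subsequently receives come from outside $A$. The correct choice is the vertex of $A$ whose final toppling occurs \emph{earliest}; then every other vertex of $A$ topples at least once afterwards and delivers in total at least $\dgr{j}^{G(A)}$ grains to $j$.
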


\subsection{The stochastic sandpile model}\label{subsec:SSM_prelim}

In this section, we introduce a stochastic variant of the ASM, called stochastic sandpile model (SSM). We are slightly more informal than in the previous section, as most of the notions introduced closely mirror those of the ASM. For a more formal definition of the SSM, see~\cite{CMS}. 

In the ASM, the only randomness in the model concerns the choice of which vertex to add a grain to at each step of the Markov chain (i.e.\ the distribution $\mu$ in Section~\ref{subsec:ASM_prelim}), while the subsequent operations -- topplings and thus stabilisation -- are deterministic.

In the SSM, we introduce an additional layer of randomness by making the topplings random. Fix some parameter $p \in (0,1)$. Informally, every time we have an unstable vertex $i$, we flip a biased coin for each incident edge to $i$, and decide to send one grain along that edge with probability $p$ (the grain gets sent to one of the neighbours of $i$), while with probability $(1-p)$ that grain remains at vertex $i$. The coin flips are independent for each edge, and independent of other vertex topplings. We thus define the \emph{stochastic toppling} operator at vertex $i$ by:

\beq\label{eq:part_toppling}
\ST_i(c) := c + \sum_{j: \{i,j\} \in E} \left( \mathds{1}_{B_j = 1} \left( \alpha_j - \alpha_i \right) \right),
\eeq
where the sum is over all vertices $j$ adjacent to $i$, counted with multiplicity, and the $(B_j)$ are i.i.d. Bernoulli random variables of parameter $p$. Note that if we take $p=1$, we have $\ST_i = \DT_i$ a.s., and the SSM is the same as the ASM in that case, hence we assume $p<1$. In general, the resulting configuration $\ST_i(c)$ is a random configuration.

Performing this (stochastic) toppling may cause other vertices to become unstable, and we topple these in turn. 
In~\cite[Theorem 2.2]{CMS} it is shown that, as in the ASM, starting from some unstable configuration $c$ and successively toppling unstable vertices, we eventually reach a (random) stable configuration $c'$.
Moreover, this configuration $c'$ does not depend on the order in which vertices are toppled. 
We write $c' = \StoStab(c)$ and call it the \emph{stochastic stabilisation} of $c$.

As for the ASM, we can define a Markov chain for the SSM on the set of stable configurations $\Stable{G}$.
Let $\mu=(\mu_1,\dots,\mu_n)$ be a probability distribution on $[n]$ such that $\mu_i>0$ for all $i \in [n]$. 
At each step of the Markov chain we add a grain at the vertex $i$ with probability $\mu_i$ and (stochastically) stabilise the resulting configuration.
We call recurrent states for this Markov chain \emph{stochastically recurrent} (SR) and denote their set $\StoRec{G}$. Note that, once again, the Markov chain is irreducible, and the maximal stable configuraiton $\maxc$ is recurrent.

We now give three equivalent characterisations of SR states in terms of grain additions and stabilisations, compatible orientations, and forbidden subconfigurations. These characterisations closely mirror the first three in Theorem~\ref{thm:DR_general} for DR states. In the case of general graphs $G$, there is as yet no known equivalent to Dhar's burning algorithm. In Section~\ref{subsec:SSM_complete}, we describe a stochastic burning algorithm in the case of complete graphs.

\begin{theorem}\label{thm:SR_general}
Let $c = (c_1,\ldots,c_n) \in \Stable{G}$ be a \emph{stable} configuration on the graph $G$. Then $c$ is SR if, and only if, one of the following three equivalent conditions holds.
\begin{enumerate}[label=(\roman*), itemsep=1pt, topsep = 5pt]
\item There exists a configuration $d \in \Config{G}$ such that $\StoStab(\maxc + d) = c$ with positive probability.
\item There exists a \emph{sink-rooted} orientation $\OO$ compatible with $c$.
\item For any subset $A \subseteq [n]$, we have $c(A) \geq \vert E(G[A]) \vert$, where $c(A) := \sum\limits_{i \in A} c_i$ is the total number of grains in $A$, and $\vert E(G[A]) \vert$ is the number of edges of $G$ with both endpoints in $A$ (i.e. the number of edges of the induced subgraph $G[A]$).
\end{enumerate}
\end{theorem}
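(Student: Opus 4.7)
The plan is to establish the three-way equivalence by proving the combinatorial equivalence (ii) $\Leftrightarrow$ (iii) separately, then closing the loop through (i) via a probabilistic dynamics argument. This mirrors the structure of Theorem~\ref{thm:DR_general}, with the key differences that \emph{acyclicity} is dropped from (ii) (since stochastic topplings allow grains to circulate) and the pointwise inequality of (iii) in the ASM case is relaxed to the summed inequality $c(A) \geq |E(G(A))|$ here.

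For (ii) $\Rightarrow$ (iii), I would simply count: given a sink-rooted $\OO$ compatible with $c$ and any $A \subseteq [n]$, each edge of $G(A)$ has both endpoints in $A$ and hence contributes exactly one to $\sum_{i \in A} \In{i}$, so $|E(G(A))| \leq \sum_{i \in A} \In{i} \leq \sum_{i \in A} c_i = c(A)$. For the converse (iii) $\Rightarrow$ (ii), the key tool is a classical orientation theorem (essentially Hakimi's): a graph $H$ admits an orientation with $\In{i} \leq b_i$ at each vertex iff $|E(H(A))| \leq \sum_{i \in A} b_i$ for every $A \subseteq V(H)$. Apply this to the induced subgraph $G([n])$ with bounds $b_i = c_i$; condition (iii) is exactly the needed hypothesis. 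Then orient all remaining edges (those joining a non-sink vertex to $0$) toward the sink. Because $c$ is stable, $\In{i} \leq c_i < \dgr{i}$, so $\Out{i} \geq 1$ at every non-sink vertex; hence no non-sink vertex is a root while the sink is one by construction, giving a sink-rooted orientation.

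For (ii) $\Rightarrow$ (i), starting from a sink-rooted $\OO$ compatible with $c$, I would design $d$ so that $\maxc + d$ is sufficiently large that every vertex can be toppled many times. The positive-probability event to condition on is that at each stochastic toppling of $i$, the Bernoulli trials fire exactly on the outgoing edges of $i$ in $\OO$; this event has probability at least $p^{|E|}$ per round of topplings, and the net effect at vertex $i$ per round is $\In{i} - \Out{i}$. By choosing $d$ so that iterating this ``$\OO$-guided'' toppling sequence a prescribed number of times lands precisely on $c$, we obtain $\StoStab(\maxc+d) = c$ with positive probability. Sink-rootedness ensures that grains net-drain toward the sink, so the process terminates. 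For (i) $\Rightarrow$ (ii), I would fix a trajectory realising $\StoStab(\maxc + d) = c$, record the net grain flow on each edge, and orient each edge in the direction of net flow (breaking ties arbitrarily, say toward the sink-side). The configuration balance $c_i = \maxc_i + d_i + (\text{net incoming at }i)$ combined with stability of $c$ yields compatibility, and the global drainage toward the sink yields sink-rootedness.

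The main obstacle will be the dynamical direction (ii) $\Rightarrow$ (i): matching an arbitrary orientation with a concrete addition vector $d$ and a positive-probability toppling pattern is delicate because intermediate configurations must remain valid (the toppling of $i$ can only occur while $i$ is unstable), so one cannot freely reorder topplings. One likely takes $d$ to be a large multiple of an appropriate vector so that partial topplings never drop any non-sink vertex below threshold until the very last round, and the scheduling is guided by a topological-type order relative to $\OO$ (using cycle-flips via Proposition~\ref{pro:equiv_orient_cycle_flip} to temporarily replace $\OO$ by a more convenient score-equivalent orientation when $\OO$ itself contains directed cycles). The details closely follow the general framework developed in \cite{CMS}.
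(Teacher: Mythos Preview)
The paper does not actually prove Theorem~\ref{thm:SR_general}: it says SR $\Leftrightarrow$ (i) is the usual Markov-chain reduction (put all grain additions first), cites \cite[Theorem~3.4]{CMS} for the equivalence with (ii), and calls (ii) $\Leftrightarrow$ (iii) graph-theoretic folklore with a pointer to \cite[Lemma~5.4.4]{SelPhD}. Your plan follows the same decomposition, and your Hakimi-type argument for (ii) $\Leftrightarrow$ (iii) is exactly the intended folklore proof (the observation that stability forces $\Out{i} \geq 1$, hence sink-rootedness, is the right finishing touch). Your sketch of (ii) $\Rightarrow$ (i) via $\OO$-guided topplings is also on the right track, and you correctly flag the scheduling of intermediate unstable states as the genuine difficulty handled in \cite{CMS}.

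Where your sketch does not work is (i) $\Rightarrow$ (ii). Orienting each edge along its net flow need not yield a compatible orientation: the balance $c_i = \maxc_i + d_i + N_i$ controls the \emph{total} signed inflow $N_i$, not the \emph{number of edges} carrying positive net inflow toward $i$, and these can be very different. For instance, many neighbours may each push a net $+1$ toward $i$ while a single edge carries a large net outflow; then $\In{i}$ in your net-flow orientation is large while $c_i$ can be $0$, so compatibility fails even though some \emph{other} sink-rooted orientation is compatible with $c$. The cleaner way to close the cycle, and the one implicit in how the paper organises the references, is to drop the direct (i) $\Rightarrow$ (ii) and instead establish (i) $\Rightarrow$ (iii), after which your Hakimi direction (iii) $\Rightarrow$ (ii) finishes: the orientation in (ii) is produced combinatorially from the inequalities (iii), not read off from a stabilisation trajectory.
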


\begin{proof}

That $c \in \Stable{G}$ is SR if, and only if, Condition~(i) holds follows as in the DR case from being able to put any grain additions first in a sequence of grain additions and topplings. The equivalence with Condition~(ii), was stated in equivalent, though slightly different, form in~\cite[Theorem 3.4]{CMS}. The equivalence of Conditions~(ii) and (iii) is more-or-less part of graph theory ``folklore''. We give a brief proof here for completeness. 

Suppose that there exists a sink-rooted orientation $\OO$ compatible with $c$, and let $A \subseteq [n]$. By summing Inequality~\eqref{eq:comp_orient_config} over all vertices in $A$, we get $c(A) \geq \sum\limits_{i \in A} \In{i} \geq \vert E(G[A]) \vert$. For the right-hand inequality, note that all edges between two vertices in $A$ are counted exactly once in the sum of in-degrees, with this sum also possibly including some edges directed $i \olefta j$ with $i \in A, j \notin A$. This shows that (ii) $\Rightarrow$ (iii).

For the converse, let $c$ be a configuration on $G$ which satisfies Condition~(iii). We proceed by induction on $m = \vert E(G) \vert - \dgr{0}$. If $m = 0$ the result is trivial. Otherwise, let $e = (i, j)$ be some edge of $G$ between two non-sink vertices $i, j \in [n]$. Consider the graph $G'$ with edge $e$ removed, and the configurations $c' := c - \alpha_i$ and $c'' := c - \alpha_j$. We claim that at least one of the configurations $c'$ and $c''$ satisfies Condition~(iii) on the graph $G'$. Suppose for now this claim proved, and without loss of generality assume the condition holds for $c'$. By induction we can find an orientation $\OO'$ of $G'$ compatible with $c'$, and taking $\OO$ to be the orientation $\OO'$ together with the edge $i \olefta j$ yields an orientation $\OO$ of $G$ which is compatible with $c$, as desired.

It therefore remains to prove the previous claim. Seeking contradiction, we assume that Condition~(iii) holds for neither $c'$ nor $c''$. Then there exists some vertex subsets $A_i$ and $A_j$ such that $c'(A_i) < \vert E(G'[A_i]) \vert$ and $c''(A_j) < \vert E(G'[A_j])$. Since Condition~(iii) holds for $c$ in $G$, we must have $i \in A_i$ and $j \in A_j$. Now suppose that $j \in A_i$. We have $c(A_i) = c'(A_i) + 1 < \vert E(G'[A_i]) \vert + 1 = \vert E(G[A_i])$, a contradiction. This means that $j \notin A_i$, and therefore $\vert E(G[A_i]) \vert = \vert E(G'[A_i]) \vert$. In particular, we get $\vert E(G[A_i]) \vert \leq c(A_i) = c'(A_i) + 1 < \vert E(G[A_i]) + 1$, so that $c(A_i) = \vert E(G[A_i])$. By symmetry, we also have $c(A_j) = E(G[A_j])$. Finally, set $A := A_i \cup A_j$, and $A' = A_i \cap A_j$, and let $k$ be the number of edges $(i', j')$ of $G$ such that $i' \in A_i \setminus A'$, $j' \in A_j \setminus A'$. Note that these include at least the edge $(i,j)$, so that $k \geq 1$, and that by construction we have $\vert E(G[A]) \vert = \vert E(G[A_i]) \vert + \vert E(G[A_j]) \vert - \vert E(G[A']) \vert + k$. We get:
\begin{align*}
c(A) & = c(A_i) + c(A_j) - c(A') \\
 & \leq c(A_i) + c(A_j) - \vert E(G[A']) \vert, \qquad \text{applying Condition~(iii) to } A' \\
 & = \vert E(G[A_i]) \vert + \vert E(G[A_j]) \vert - \vert E(G[A']) \vert \\
 & < \vert E(G[A_i]) \vert + \vert E(G[A_j]) \vert - \vert E(G[A']) \vert + 1\\
 & \leq \vert E(G[A_i]) \vert + \vert E(G[A_j]) \vert - \vert E(G[A']) \vert + k = \vert E(G[A]) \vert.
\end{align*}
This contradicts Condition~(iii) for $c$, thus completing the proof.

\end{proof}

\begin{remark}

Conditions~(i) or (ii) of Theorems~\ref{thm:DR_general} and \ref{thm:SR_general} immediately imply the inclusion $\DetRec{G} \subseteq \StoRec{G}$. In general, the converse doesn't hold. Consider the graph in Figure~\ref{fig:sto_non_det} below, with the configuration $c = (1,1,1)$ (the sink is the black square vertex). This configuration is SR since the orientation exhibited on the figure is compatible with $c$, but is not DR as the set of the three non-sink vertices forms a forbidden subconfiguration for Condition~(iii) of Theorem~\ref{thm:DR_general}. Proposition~\ref{pro:DR_SR_equal} will give straightforward, but useful, conditions under which these two sets are the same.
\begin{figure}[ht]
  \centering
  \begin{tikzpicture}[
     scale=0.8,
     pile/.style={very thick, ->, >=stealth'},
     circle/.style={very thick, fill=white} 
     ]
  \path [name path=a] (0,0)--(0,2);
  \path [name path=b] (0,2)--(-1.5,4);
  \path [name path=c] [very thick, ->] (-1.5,4)--(1.5,4);
  \path [name path=d] [very thick, ->] (1.5,4)--(0,2);
  \draw [name path=1, circle] (0,2) circle [radius = 0.5];
  \draw [name path=2, circle] (-1.5,4) circle [radius = 0.5];
  \draw [name path=3, circle] (1.5,4) circle [radius = 0.5];
  \draw [fill=black] (-0.2,-0.4) rectangle (0.2,0);
  \path [name intersections={of=a and 1,by=i1}];
  \path [name intersections={of=b and 1,by=i2}];
  \path [name intersections={of=b and 2,by=i3}];
  \path [name intersections={of=c and 2,by=i4}];
  \path [name intersections={of=c and 3,by=i5}];
  \path [name intersections={of=d and 3,by=i6}];
  \path [name intersections={of=d and 1,by=i7}];
  \draw [pile] (i1)--(0,0);
  \draw [pile] (i2)--(i3);
  \draw [pile] (i4)--(i5);
  \draw [pile] (i6)--(i7);
  \node at (0,2) {$1$};
  \node at (-1.5,4) {$1$};
  \node at (1.5,4) {$1$};
  \end{tikzpicture}
  \caption{Example of a configuration which is SR but not DR.\label{fig:sto_non_det}}
\end{figure}
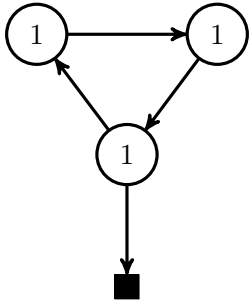

\end{remark}

\begin{definition}\label{def:strong_stable}
Let $c \in \Config{G}$. We say that $c$ is \emph{strongly stable} if for all $i \in [n]$, we have $c_i < \dgr{i} - 1$. In other words, we can add a grain to each vertex of $[n]$ in the configuration $c$, and the resulting configuration will still be stable.
\end{definition}

\begin{remark}\label{rem:strong_stable_can_be_SR}
Suppose that the graph $G$ is simple, i.e.\ has no multiple edges. By Dhar's burning criterion, a strongly stable state can never be DR, since after burning the sink no further vertices can be burned. However, it is possible for a strongly stable state to be SR, as shown in Figure~\ref{fig:superstab_SR} below, which exhibits a strongly stable state on $K_3^0$ with a corresponding compatible orientation (the sink is the central black square).
\begin{figure}[ht]
  \centering
  \begin{tikzpicture}[
     scale=0.8,
     pile/.style={very thick, ->, >=stealth'},
     circle/.style={very thick, fill=white} 
     ]
  \path [name path=a] (0,-0.4)--(0,2); 
  \path [name path=b] (0,-0.4)--(-2,-2); 
  \path [name path=c] (0,-0.4)--(2,-2); 
  \path [name path=d] (0,2)--(-2,-2); 
  \path [name path=e] (-2,-2)--(2,-2); 
  \path [name path=f] (2,-2)--(0,2); 
  \draw [name path=1, circle] (0,2) circle [radius = 0.5];
  \draw [name path=2, circle] (-2,-2) circle [radius = 0.5];
  \draw [name path=3, circle] (2,-2) circle [radius = 0.5];
  \draw [name path=0, fill=black] (-0.2,-0.6) rectangle (0.2,-0.2);
  \path [name intersections={of=a and 0,by=i01}];
  \path [name intersections={of=a and 1,by=i10}];
  \path [name intersections={of=b and 0,by=i02}];
  \path [name intersections={of=b and 2,by=i20}];
  \path [name intersections={of=c and 0,by=i03}];
  \path [name intersections={of=c and 3,by=i30}];
  \path [name intersections={of=d and 1,by=i12}];
  \path [name intersections={of=d and 2,by=i21}];
  \path [name intersections={of=e and 2,by=i23}];
  \path [name intersections={of=e and 3,by=i32}];
  \path [name intersections={of=f and 3,by=i31}];
  \path [name intersections={of=f and 1,by=i13}];
  \draw [pile] (i10)--(i01);
  \draw [pile] (i20)--(i02);
  \draw [pile] (i30)--(i03);
  \draw [pile] (i12)--(i21);
  \draw [pile] (i23)--(i32);
  \draw [pile] (i31)--(i13);
  \node at (0,2) {$1$};
  \node at (-2,-2) {$1$};
  \node at (2,-2) {$1$};
  \end{tikzpicture}
  \caption{Example of a configuration which is strongly stable and SR.\label{fig:superstab_SR}}
\end{figure}
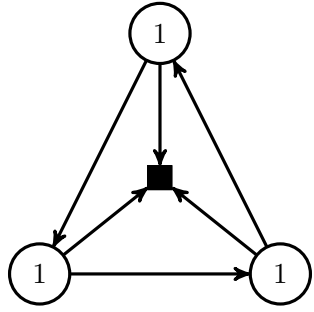
\end{remark}

\subsection{Minimal recurrent configurations}\label{subsec:minrec}

In this part we mention briefly a few results about \emph{minimal} recurrent configurations. The results are nearly identical for minimal SR or DR states, so we make a common section for both models for the sake of brevity and to avoid too much repetition, and talk simply of recurrent states.

First, observe that if $c$ is a recurrent state, by summing Inequality~\eqref{eq:comp_orient_config} over all vertices $i \in [n]$, we get:
\beq\label{eq:total_grains_ineq}
\sum\limits_{i=1}^n c_i \geq \vert E \vert - \dgr{0}.
\eeq
This naturally leads to the definition of the \emph{level} statistic of a recurrent configuration: 
\beq\label{eq:level_def}
\level(c) := \left( \sum\limits_{i=1}^n c_i \right) + \dgr{0} - \vert E \vert.
\eeq
By also noting that a recurrent configuration is stable, so $c_i \leq \dgr{i} - 1$ for all $i \in [n]$, we get the bounds:
\beq\label{eq:bounds_level}
0 \leq \level(c) \leq \vert E \vert - n,
\eeq
which hold for any recurrent configuration $c$.

There is a natural partial order $\preceq$ on the set of configurations. For $c,c' \in \Config{G}$, we define $c \preceq c'$ if for all $i \in [n]$, $c_i \leq c'_i$. From the Markov chain definition, if $c\preceq c'$ with $c$ a recurrent state, and $c'$ is stable, then $c'$ is also recurrent. We say that $c$ is \emph{minimal recurrent} if $c$ is recurrent and minimal for the partial order $\preceq$. 

\begin{theorem}\label{thm:char_minrec_general_orientation}
Let $c \in \Stable{G}$ be a stable configuration on $G$. Then $c$ is minimal SR, resp.\ minimal DR, if, and only if, there exists a sink-rooted, resp.\ acyclic sink-rooted, orientation $\OO$ of $G$, such that:
\beq\label{eq:char_minrec_general_orientation}
\forall i \in [n], \, c_i = \In{i}.
\eeq
Moreover, such an orientation is unique up to score-equivalence, resp.\ unique.
\end{theorem}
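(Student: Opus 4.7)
My plan is to prove both directions of both equivalences by comparing $c$ with the candidate configuration $c^{\OO}$ defined by $c^{\OO}_i := \In{i}$, using the characterisations (ii) from Theorems~\ref{thm:DR_general} and~\ref{thm:SR_general}. The two cases (SR and DR) are parallel, differing only in whether the witnessing orientation is required to be acyclic; I will first treat the SR case in detail and then note what changes for DR.

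For the SR direction, suppose $c$ is minimal SR. By Theorem~\ref{thm:SR_general}(ii) there is a sink-rooted orientation $\OO$ with $c_i \geq \In{i}$ for all $i \in [n]$. I will show that $c^{\OO}$ is itself stable and SR, from which $c^{\OO} \preceq c$ and minimality of $c$ force equality. Stability comes from the fact that since $0$ is the \emph{unique} root, every other vertex has at least one outgoing edge in $\OO$, so $\In{i} < \dgr{i}$. The SR property of $c^{\OO}$ is immediate from Theorem~\ref{thm:SR_general}(ii) since $\OO$ is trivially compatible with $c^{\OO}$. Conversely, if $c_i = \In{i}$ for some sink-rooted $\OO$, then $c$ is SR by Theorem~\ref{thm:SR_general}(ii), and for minimality I will use the identity
\beq
\sum_{i \in [n]} \In{i} = |E| - \dgr{0},
\eeq
which follows from $\sum_{i \in [n]_0} \In{i} = |E|$ together with $\In{0} = \dgr{0}$ (the sink being the unique root). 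Thus $\sum c_i = |E| - \dgr{0}$, while any SR $c'' \preceq c$ must satisfy $\sum c''_i \geq |E| - \dgr{0}$ by summing Inequality~\eqref{eq:comp_orient_config} over $[n]$ for its own compatible sink-rooted orientation; combined with $c'' \preceq c$ this forces $c'' = c$.

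For the DR case the same argument goes through verbatim once we replace Theorem~\ref{thm:SR_general}(ii) by Theorem~\ref{thm:DR_general}(ii): if $c$ is minimal DR, an acyclic sink-rooted compatible $\OO$ exists, and $c^{\OO}$ is again stable and DR (witnessed by the same $\OO$); conversely, if $c = c^{\OO}$ for some acyclic sink-rooted $\OO$, then $c$ is DR, and minimality is established by the same summation argument, since any DR $c'' \preceq c$ is in particular SR.

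Finally, for uniqueness, if $\OO$ and $\OO'$ are two sink-rooted orientations with $\In{i} = \mathrm{in}^{\OO'}_i$ for all $i \in [n]$, then Definition~\ref{def:equiv_orient} (using also that $\In{0} = \mathrm{in}^{\OO'}_0 = \dgr{0}$ since both are sink-rooted) gives that $\OO$ and $\OO'$ are score-equivalent, proving uniqueness up to score-equivalence in the SR case. For the DR case, we additionally invoke the remark following Proposition~\ref{pro:equiv_orient_cycle_flip}: the score-equivalence class of an acyclic orientation is a singleton, so $\OO = \OO'$. The main subtleties to watch for are the strict inequality $\In{i} < \dgr{i}$ (which genuinely requires the sink to be the \emph{unique} root, not just \emph{a} root) and the bookkeeping of the edge-degree identity at the sink; everything else reduces to a direct application of the characterisations already at our disposal.
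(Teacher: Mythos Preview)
Your proof is correct and follows essentially the same approach as the paper: both directions use the compatibility characterisations of Theorems~\ref{thm:DR_general}(ii) and~\ref{thm:SR_general}(ii), and the converse (minimality) is established via the summation identity $\sum_{i\in[n]}\In{i}=|E|-\dgr{0}$, which is exactly the paper's level argument in different notation. The only cosmetic difference is in the forward direction: the paper removes a single grain (arguing that if $c_i>\In{i}$ for some $i$ then $c-\alpha_i$ is still SR, so $c$ is not minimal), whereas you jump directly to the full in-degree configuration $c^{\OO}$ and invoke minimality once; your version requires the extra observation that $c^{\OO}$ is stable (which you correctly supply via the unique-root hypothesis), while the paper's one-step descent avoids this. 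Your treatment of uniqueness is identical to the paper's.
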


\begin{proof}[Sketch of proof]
Theorem~\ref{thm:char_minrec_general_orientation} was proved in the (minimal) DR case in~\cite{Schulz}. The proof in the SR case is identical. If $\OO$ is a sink-rooted orientation, compatible with a SR state $c$, and there exists $i \in [n]$ such that $c_i > \In{i}$, then $c':=c-\alpha_i$ is still compatible with $\OO$, and so is SR by Theorem~\ref{thm:SR_general}, which implies that $c$ is not minimal. 

Conversely, if $c$ is SR and there exists a sink-rooted orientation $\OO$ which satisfies Equation~\eqref{eq:char_minrec_general_orientation}, then by summation we have $\level(c) = 0$. If $c$ were not minimal, there would exist $c' \in \StoRec{G}$ such that $c' \prec c$, and by summation we would have $\level(c') < \level(c) = 0$, which is impossible by the left-hand side of Inequality~\eqref{eq:bounds_level}, so $c$ must be minimal, as desired.

The uniqueness of $\OO$ up to score-equivalence follows immediately from the characterisation. In the DR case, as noted in Section~\ref{subsec:orient}, uniqueness up to score equivalence of acyclic orientations is simply uniqueness.
\end{proof}

The proof above suggests a link between minimal recurrent states and those with level equal to $0$. This is indeed the case.

\begin{proposition}\label{pro:char_minrec_general_level}
Let $c$ be a recurrent configuration. Then $c$ is minimal recurrent if, and only if, $\level(c) = 0$.
\end{proposition}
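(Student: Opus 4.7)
The plan is to prove both implications separately, using Theorem~\ref{thm:char_minrec_general_orientation} for the forward direction and Inequality~\eqref{eq:bounds_level} for the backward direction. Neither half should be difficult; the main ingredients are already in place from the sketch of proof of Theorem~\ref{thm:char_minrec_general_orientation}.

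For the forward direction, suppose $c$ is minimal recurrent. By Theorem~\ref{thm:char_minrec_general_orientation}, there exists a sink-rooted orientation $\OO$ of $G$ (acyclic in the DR case) such that $c_i = \In{i}$ for all $i \in [n]$. Since $\OO$ is sink-rooted, the sink is a root, meaning every edge incident to $0$ is incoming at $0$, so $\In{0} = \dgr{0}$. Summing $\In{i}$ over all vertices of $G$ counts each edge exactly once, hence
\[
\sum_{i=1}^n c_i \;=\; \sum_{i=1}^n \In{i} \;=\; \vert E \vert - \In{0} \;=\; \vert E \vert - \dgr{0}.
\]
Plugging this into the definition~\eqref{eq:level_def} of the level statistic gives $\level(c) = 0$.

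For the backward direction, suppose $c$ is recurrent with $\level(c) = 0$, and argue by contradiction. If $c$ were not minimal recurrent, there would exist a recurrent configuration $c' \in \Config{G}$ with $c' \prec c$, i.e.\ $c'_i \leq c_i$ for all $i \in [n]$ with at least one strict inequality. Then $\sum_{i=1}^n c'_i < \sum_{i=1}^n c_i$, and by~\eqref{eq:level_def}
\[
\level(c') \;=\; \level(c) + \sum_{i=1}^n (c'_i - c_i) \;<\; \level(c) \;=\; 0,
\]
contradicting the lower bound $\level(c') \geq 0$ from Inequality~\eqref{eq:bounds_level}. Hence $c$ must be minimal recurrent.

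There is no real obstacle here: the statement is essentially a corollary of the orientation characterisation together with the elementary level bound, and the proof works uniformly for both the SR and DR cases since both Theorem~\ref{thm:char_minrec_general_orientation} and Inequality~\eqref{eq:bounds_level} apply in either setting.
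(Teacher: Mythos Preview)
Your proof is correct and follows essentially the same approach as the paper's own sketch: both directions use Theorem~\ref{thm:char_minrec_general_orientation} (via summation of $c_i = \In{i}$) for minimal $\Rightarrow$ level zero, and the lower bound of~\eqref{eq:bounds_level} plus a contradiction argument for the converse. You have simply filled in the details of the summation step more explicitly than the paper does.
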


This result is often implicit in the literature, particularly when considering the \emph{level polynomial} of a graph which counts recurrent configurations according to their level. We sketch a brief proof here for completeness.

\begin{proof}[Sketch of proof]
If $c$ is recurrent and $\level(c) = 0$, then $c$ is minimal recurrent. Otherwise there would exist a recurrent state $c'$ such that $c' \prec c$, and by summation we would $\level(c') < \level(c) = 0$, which is impossible by the left-hand side of Inequality~\eqref{eq:bounds_level}.

Conversely, if $c$ is minimal recurrent, there exists a sink-rooted (acyclic in DR case) orientation $\OO$ satisfying Equation~\eqref{eq:char_minrec_general_orientation}, and by summation we get $\level(c) = 0$ as desired.
\end{proof}

We end this section by stating conditions under which all SR states are also DR (we know that the converse is always true).

\begin{proposition}\label{pro:DR_SR_equal}
Let $G$ be a graph with vertex set $[n]_0$ and edge set $E$. The following statements are equivalent.
\begin{enumerate}[label=(\roman*), itemsep=1pt, topsep = 5pt]
\item All SR states on $G$ are also DR, i.e. $\StoRec{G} = \DetRec{G}$.
\item All \emph{minimal} SR states on $G$ are also DR.
\item The graph $G' := G\big[ [n] \big]$ obtained by deleting the sink and all incident edges in $G$ is a forest.
\end{enumerate}
\end{proposition}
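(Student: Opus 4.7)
The plan is to close the cycle of implications $(\mathrm{iii}) \Rightarrow (\mathrm{i}) \Rightarrow (\mathrm{ii}) \Rightarrow (\mathrm{iii})$. The forward implications $(\mathrm{iii}) \Rightarrow (\mathrm{i})$ and $(\mathrm{i}) \Rightarrow (\mathrm{ii})$ are immediate from the orientation characterisations already established; the substance of the proof lies in the contrapositive direction $(\mathrm{ii}) \Rightarrow (\mathrm{iii})$, where I must produce a minimal SR state that fails to be DR whenever $G'$ contains a cycle.

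For $(\mathrm{iii}) \Rightarrow (\mathrm{i})$, let $c \in \StoRec{G}$ and pick a sink-rooted orientation $\OO$ compatible with $c$ via Theorem~\ref{thm:SR_general}(ii). Because every edge incident to $0$ is incoming in $\OO$, no directed cycle of $\OO$ can visit the sink, and hence any such cycle would lie inside $G'$. Since $G'$ is assumed acyclic, $\OO$ must itself be acyclic, so Theorem~\ref{thm:DR_general}(ii) gives $c \in \DetRec{G}$. The implication $(\mathrm{i}) \Rightarrow (\mathrm{ii})$ is trivial, as minimal SR states are in particular SR.

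For $(\mathrm{ii}) \Rightarrow (\mathrm{iii})$ I argue by contraposition, so assume $G'$ contains a cycle $C$. I construct a sink-rooted orientation $\OO$ of $G$ that contains $C$ as a directed cycle as follows: orient the edges of $C$ cyclically; orient every edge incident to $0$ as going into $0$; and, for every remaining edge $\{u,v\}$, orient it from $v$ to $u$ if $d(u) < d(v)$, where $d$ is the graph distance to $0$ in $G$, breaking ties using any fixed total order on $[n]$. No conflict arises because $C$ lies in $G'$ and is therefore disjoint from the edges touching $0$. The orientation $\OO$ is sink-rooted: a vertex of $C$ has an outgoing edge along $C$, and any other non-sink vertex has a neighbour at strictly smaller distance to $0$, giving it an outgoing edge. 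Setting $c_i := \In{i}^{\OO}$ yields a stable configuration (each non-sink vertex has at least one outgoing edge in $\OO$), and it is minimal SR by Theorem~\ref{thm:char_minrec_general_orientation}.

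It remains to show that $c$ is not DR. Since $\OO$ is sink-rooted one has $\sum_{i \in [n]} \In{i}^{\OO} = |E| - \dgr{0}$, so $\level(c) = 0$ by construction. If $c$ were DR, then Proposition~\ref{pro:char_minrec_general_level} would make it minimal DR, and Theorem~\ref{thm:char_minrec_general_orientation} in the DR case would provide an acyclic sink-rooted orientation $\OO'$ with $\In{i}^{\OO'} = c_i = \In{i}^{\OO}$ for all $i \in [n]$. Then $\OO$ and $\OO'$ would be score-equivalent, but the score-equivalence class of an acyclic orientation is trivial (the remark after Proposition~\ref{pro:equiv_orient_cycle_flip}), forcing $\OO = \OO'$ and contradicting the presence of the directed cycle $C$ in $\OO$. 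The main obstacle is simply the construction of a sink-rooted orientation containing a prescribed directed cycle; once that is in hand, the non-DR conclusion follows essentially for free from Theorem~\ref{thm:char_minrec_general_orientation} combined with the rigidity of acyclic orientations under score-equivalence.
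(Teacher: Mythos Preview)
Your proof is correct and follows essentially the same route as the paper: show $(\mathrm{iii}) \Rightarrow (\mathrm{i}) \Rightarrow (\mathrm{ii})$ directly, and for $\neg(\mathrm{iii}) \Rightarrow \neg(\mathrm{ii})$ build a sink-rooted orientation $\OO$ containing the given cycle as a directed cycle, take $c_i := \In{i}$, and use the rigidity of acyclic orientations under score-equivalence to conclude that $c$ cannot be DR. The only difference is cosmetic: the paper removes the cycle edges, picks a sink-rooted orientation of the remainder, and re-inserts the directed cycle, whereas you orient the non-cycle edges explicitly via distance-to-sink; your construction is in fact slightly more careful, since the paper's ``sink-rooted orientation of $G''$\,'' need not literally exist when deleting the cycle disconnects $G$, even though the combined $\OO$ is always sink-rooted.
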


\begin{proof}
Trivially, (i) implies (ii). It is also straightforward to show that (iii) implies (i). Let $c$ be a SR state. By Theorem~\ref{thm:SR_general} there exists a sink-rooted orientation $\OO$ of $G$ that is compatible with $c$. But if $G'$ is a forest, it contains no cycles, so that any sink-rooted orientation of $G$ is acyclic (there can be no directed cycles containing the sink, since it is a root of the orientation). Therefore $\OO$ is an acyclic sink-rooted orientation of $G$, compatible with $c$, and so $c$ is DR by Theorem~\ref{thm:DR_general}.

It therefore remains to show that (ii) implies (iii), which we do by contraposition. Suppose that $G'$ contains a cycle $(i_1, \ldots i_k, i_1)$. Let $G''$ be the graph $G$ with all edges of this cycle removed, and $\OO'$ be a sink-rooted orientation of $G''$. Now let $\OO$ be the sink-rooted orientation of $G$ obtained by adding the directed cycle $i_1 \orighta i_2 \orighta \cdots \orighta i_k \orighta i_1$ to $\OO'$, and define $c$ to be the minimal SR state compatible with $\OO$. If $c$ were DR, there would exist an acyclic orientation that is compatible with $c$, and therefore score-equivalent to $\OO$. But by Proposition~\ref{pro:equiv_orient_cycle_flip} this is impossible:  one can't go from an acyclic orientation to one containing a directed cycle through a series of cycle-flips. Therefore $c$ is not DR, as desired.
\end{proof}

\subsection{Parking functions}\label{subsec:parking_functions}

\begin{definition}\label{def:parking_function}
Let $p=(p_1,\ldots,p_n) \in \N^n$ be a tuple of positive integers, and $\inc{p}$ its non-decreasing rearrangement. We say that $p$ is a \emph{parking function} if
\beq\label{eq:parking_function_def}
\forall i \in [n], \inc{p}_i \leq i.
\eeq
The set of all $n$-parking functions is denoted $\PF{n}$.
\end{definition}

\begin{remark}
The terminology ``parking function'' comes from the following observation. Suppose we have $n$ cars trying to park in $n$ spaces, cars and spaces both labeled $1,\ldots,n$. Each car has a preferred parking spot $p_i$. The cars enter the car park in order $1,\ldots,n$ and for each $i$ the car $i$ parks in the first available spot $q \geq p_i$ (if no such spot exists, the car cannot park). Then all $n$ cars can park if, and only if, $p=(p_1,\ldots,p_n)$ is a parking function.
\end{remark}

There is a rich literature on the study of parking functions. We refer interested readers to the excellent survey by C. Yan~\cite{Yan}. In Theorem~\ref{thm:detrec_pf} we recall a straightforward bijection between $n$-parking functions and DR states of the ASM on the complete graph $K_n^0$.

\subsection{Polytopes}\label{subsec:polytopes}

A polytope is the convex hull of a finite set of points $S \subset \R^n$ for some $n$. In other words, it is the set of all points of the form $\sum\limits_{s \in S} \lambda_s s$, where $0 \leq \lambda_s \leq 1$ for all $s \in S$ and  $\sum\limits_{s \in S} \lambda_s = 1$. In this paper, we will be interested in three polytopes in particular.
\begin{itemize}
\item The regular $n$-permutohedron. This is the polytope of the set $\mathcal{S}_n$ of permutations of length $n$. We denote it $\PermPoly{n}$.
\item The $n$-dimensional parking function polytope. This is the polytope of the set of $n$-parking functions $\PF{n}$. We denote it $\PFPoly{n}$.
\item The $n$-dimensional DR polytope. This is the polytope of the set of DR states on the complete graph $K_n^0$. We denote it $\DRPoly{n}$.
\end{itemize}

Polytopes, and in particular permutohedrons, have been well-studied in the combinatorics literature (see \cite{Post} and references therein). One element of interest when studying a polytope combinatorially (usually in this case $S \subset \Z^n$) is to look at its set of integer lattice points, i.e.\ the set of points in the polytope with integer coordinates. In general these sets are not easy to compute, let alone obtain an explicit formula for. One of the main results in our paper (Theorem~\ref{thm:SR_DRPoly}) states that the set of SR states on the complete graph $K_n^0$ is the set of integer lattice points in the $n$-dimensional DR polytope $\DRPoly{n}$.


\section{The sandpile model(s) on complete graphs}\label{sec:sandpile_complete}

In this section, we study the ASM and SSM on the complete graphs $K_n^0$. Recall that the complete graph $K_n^0$ is the graph with vertex set $[n]_0 = \{0,1,\cdots,n\}$ where there is one edge between any pair of distinct vertices. Because the sink is connected to all the non-sink vertices, there is an obvious one-to-one correspondence between sink-rooted orientations of $K_n^0$ and orientations of $K_n$, where $K_n$ is the complete graph with vertex set $[n] = \{1,\cdots,n\}$, i.e. the graph $K_n^0$ with sink removed.

This means that we can essentially ignore the role the sink plays in characterising recurrent states for the sandpile model on $K_n^0$. Therefore, to lighten notation, we will slightly abuse notation and talk of the sandpile model on, and orientations of, $K_n$. We will write $\Confign$ for the set of configurations on $K_n$, with similar notation for $\Stablen$, $\DetRecn$, and $\StoRecn$.

The following result is an immediate consequence of the symmetry of the complete graph, which we state here as it is useful at various points in the paper.

\begin{proposition}\label{pro:comp_inc_sym}
Let $c = (c_1,\cdots,c_n) \in \Confign$ be a configuration on $K_n$. Then $c$ is recurrent (DR or SR) if, and only if, its non-decreasing rearrangement $\inc{c}$ is recurrent.
\end{proposition}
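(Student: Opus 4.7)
The plan is to apply the forbidden-subconfiguration characterizations, namely Condition~(iii) of Theorem~\ref{thm:DR_general} for DR states and Condition~(iii) of Theorem~\ref{thm:SR_general} for SR states, and to exploit the fact that $K_n^0$ is vertex-transitive on its non-sink vertices. The two cases will be handled in parallel, since the argument is structurally identical.

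First I would specialise these conditions to the complete graph. For any subset $A \subseteq [n]$, the induced subgraph $K_n(A)$ is itself complete on $|A|$ vertices, so every vertex of $A$ has degree $|A|-1$ in $K_n(A)$, and $\vert E(K_n(A)) \vert = \binom{|A|}{2}$. Thus Condition~(iii) of Theorem~\ref{thm:DR_general} becomes
\beq\label{eq:DR_comp}
c \text{ is DR} \iff \forall A \subseteq [n], \ \max_{i \in A} c_i \geq |A| - 1,
\eeq
while Condition~(iii) of Theorem~\ref{thm:SR_general} becomes
\beq\label{eq:SR_comp}
c \text{ is SR} \iff \forall A \subseteq [n], \ \sum_{i \in A} c_i \geq \binom{|A|}{2}.
\eeq
Crucially, on the right-hand side of each equivalence the quantity compared to $c$ depends only on $|A|$, not on which vertices of $[n]$ lie in $A$.

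Next I would note that for any permutation $\sigma \in \mathcal{S}_n$, writing $c^\sigma := (c_{\sigma(1)}, \ldots, c_{\sigma(n)})$ and $B := \sigma(A)$, we have $|B| = |A|$, and
\[
\max_{i \in A} c^\sigma_i = \max_{j \in B} c_j, \qquad \sum_{i \in A} c^\sigma_i = \sum_{j \in B} c_j.
\]
As $A$ ranges over subsets of $[n]$ of a given size, so does $B$, so the conditions on the right-hand sides of \eqref{eq:DR_comp} and \eqref{eq:SR_comp} are invariant under arbitrary permutation of coordinates. Applying this to the (unique) permutation $\sigma$ for which $c^\sigma = \inc{c}$ gives the claim in both the DR and SR cases.

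There is no real obstacle: the substance of the proposition is simply that Theorems~\ref{thm:DR_general}(iii) and \ref{thm:SR_general}(iii) reduce, on a vertex-transitive graph like $K_n$, to conditions that involve $c$ only through symmetric functions of its coordinates. The only thing to be a little careful about is writing the argument once for both models rather than duplicating it; this is what motivates stating the two specialised characterisations together in \eqref{eq:DR_comp}--\eqref{eq:SR_comp} before invoking the permutation symmetry.
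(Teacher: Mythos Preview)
Your argument is correct and is precisely in the spirit of the paper, which gives no proof at all beyond the remark that the proposition is ``an immediate consequence of the symmetry of the complete graph''; your use of Conditions~(iii) of Theorems~\ref{thm:DR_general} and~\ref{thm:SR_general} is one natural way to make that symmetry explicit. One minor slip: the permutation $\sigma$ with $c^\sigma = \inc{c}$ need not be unique when $c$ has repeated entries, but of course any such $\sigma$ works.
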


\subsection{The ASM on $K_n$: parking functions and the burning algorithm}\label{subsec:ASM_complete}

In this part, we recall the bijection from the set of DR states on $K_n$ and the set of $n$-parking functions from the seminal work by Cori and Rossin~\cite{CR}.

\begin{theorem}\label{thm:detrec_pf}
Let $c = (c_1,\cdots,c_n) \in \Confign$ be a configuration of the ASM on $K_n$. Define $p = (p_1,\cdots,p_n) := (n-c_1,\cdots,n-c_n)$ to be the n-complement of $c$ (we write $p = n-c$ for short). Then $c$ is DR if, and only if, $p$ is a parking function. Thus the map $c \mapsto n-c$ defines a bijection from $\DetRecn$ to $\PF{n}$.
\end{theorem}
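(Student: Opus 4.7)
The plan is to use the forbidden-subconfigurations characterisation from Theorem~\ref{thm:DR_general}(iii), which becomes particularly transparent on the complete graph. For any subset $A \subseteq [n]$ with $|A| = k$, the induced subgraph $K_n^0(A)$ is itself a complete graph on $k$ vertices (the sink $0$ is not in $A$), so every $i \in A$ has $\dgr{i}^{K_n^0(A)} = k-1$. Condition (iii) thus reads: for every $k \in [n]$ and every $k$-subset $A \subseteq [n]$, there exists $i \in A$ with $c_i \geq k-1$.

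Over the choice of $A$, the smallest value of $\max_{i \in A} c_i$ is attained when $A$ consists of the $k$ vertices carrying the smallest values of $c$; in that case $\max_{i \in A} c_i = \inc{c}_k$. Consequently, $c$ is DR if and only if
\[
\inc{c}_k \geq k - 1 \quad \text{for every } k \in [n].
\]

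Now set $p := n - c$. Since $c$ is stable, $c_i \leq n-1$, so $p_i \geq 1$ and $p \in \N^n$. Because $x \mapsto n - x$ is order-reversing, one has $\inc{p}_k = n - \inc{c}_{n-k+1}$ for every $k \in [n]$. Substituting this identity and letting $j = n - k + 1$ transforms the DR condition $\inc{c}_j \geq j - 1$ (for all $j \in [n]$) into the parking function condition $\inc{p}_k \leq k$ (for all $k \in [n]$) of Definition~\ref{def:parking_function}, and these transformations are reversible.

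Finally, $c \mapsto n - c$ is visibly an involution on $\{0, 1, \ldots, n-1\}^n = \Stablen$, so it is a bijection, and the equivalence established above shows that it restricts to a bijection $\DetRecn \to \PF{n}$. The only mild technical point is tracking the duality between non-decreasing and non-increasing rearrangements under $x \mapsto n - x$; the substance of the argument is a direct specialisation of Theorem~\ref{thm:DR_general}(iii) to $K_n^0$, exploiting the fact that induced subgraphs of $K_n^0$ on subsets of $[n]$ are themselves complete.
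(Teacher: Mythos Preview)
Your argument is correct. The paper does not actually prove Theorem~\ref{thm:detrec_pf}; it is recalled from Cori and Rossin~\cite{CR} without proof, and Corollary~\ref{cor:detrec_comp_burn} is then stated as an immediate consequence. Your proof proceeds in the reverse logical order: you first establish the content of Corollary~\ref{cor:detrec_comp_burn} directly from the forbidden-subconfigurations criterion (Theorem~\ref{thm:DR_general}(iii)), observing that on $K_n^0$ the worst $k$-subset for condition~(iii) is the one carrying the $k$ smallest values, and then deduce the parking-function characterisation via the order-reversing substitution $\inc{p}_k = n - \inc{c}_{n-k+1}$. This is a clean, self-contained route.

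Two small points of precision. First, the theorem allows $c \in \Confign$ rather than $c \in \Stablen$, so stability should be noted explicitly on both sides: if $c$ is DR it is stable by definition, and if $p \in \PF{n}$ then $1 \leq p_i \leq n$ forces $c_i \in \{0,\ldots,n-1\}$, so $c$ is stable and Theorem~\ref{thm:DR_general} applies. Second, the map $c \mapsto n-c$ is an involution on $\Z^n$, but it does not send $\Stablen = \{0,\ldots,n-1\}^n$ to itself; rather it is a bijection from $\{0,\ldots,n-1\}^n$ onto $\{1,\ldots,n\}^n$. Your conclusion is unaffected, but the phrase ``involution on $\Stablen$'' should be adjusted accordingly.
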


We get the immediate following corollary, which can be thought of as a re-writing of Dhar's burning criterion for complete graphs.

\begin{corollary}\label{cor:detrec_comp_burn}
Let $c = (c_1,\cdots,c_n) \in \Stablen$ be a stable configuration on $K_n$. Then $c$ is DR if, and only if, for all $i \in [n]$, we have $\inc{c}_i \geq i-1$, where $\inc{c} = (\inc{c}_1, \cdots, \inc{c}_n)$ is the non-decreasing rearrangement of $c$.
\end{corollary}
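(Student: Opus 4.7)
The plan is to deduce the corollary directly from the bijection in Theorem~\ref{thm:detrec_pf} combined with the definition of parking functions (Definition~\ref{def:parking_function}), by carefully translating the parking function condition on $p = n - c$ into a condition on the non-decreasing rearrangement of $c$ itself.

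First I would record the key observation about rearrangements: if $\inc{c}_1 \leq \inc{c}_2 \leq \cdots \leq \inc{c}_n$ is the non-decreasing rearrangement of $c$, then $n - \inc{c}_1 \geq n - \inc{c}_2 \geq \cdots \geq n - \inc{c}_n$, so the non-decreasing rearrangement of $p = n - c$ is the reverse sequence. Concretely,
\[
\inc{p}_i = n - \inc{c}_{n+1-i} \quad \text{for all } i \in [n].
\]

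Next I would apply Theorem~\ref{thm:detrec_pf}: $c$ is DR if, and only if, $p = n-c$ is a parking function, which by Definition~\ref{def:parking_function} is the condition $\inc{p}_i \leq i$ for all $i \in [n]$. Substituting the identity above, this rewrites as $n - \inc{c}_{n+1-i} \leq i$, i.e.\ $\inc{c}_{n+1-i} \geq n - i$ for every $i \in [n]$. Performing the change of index $j = n+1-i$ (which also ranges over $[n]$), this becomes $\inc{c}_j \geq j - 1$ for every $j \in [n]$, exactly as claimed.

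There is no serious obstacle: the entire content is the arithmetic identity $\inc{p}_i = n - \inc{c}_{n+1-i}$ together with an index substitution. The only mild care needed is to note that taking the non-decreasing rearrangement does not commute with $c \mapsto n-c$; one must reverse the ordering, which is precisely what produces the $i-1$ bound from the $\leq i$ bound of Definition~\ref{def:parking_function}.
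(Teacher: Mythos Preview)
Your argument is correct and is precisely the derivation the paper intends: the corollary is stated there as an ``immediate'' consequence of Theorem~\ref{thm:detrec_pf} together with Definition~\ref{def:parking_function}, and your computation (in particular the reversal identity $\inc{p}_i = n - \inc{c}_{n+1-i}$ followed by the reindexing $j = n+1-i$) is exactly the translation that makes this immediacy explicit.
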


\subsection{The SSM on $K_n$: a stochastic burning algorithm}\label{subsec:SSM_complete}

In this part, we study the SSM on the complete graphs $K_n$. First, we restate the characterisation result from Theorem~\ref{thm:SR_general}, Condition~(ii), in the complete graph case.

\begin{theorem}\label{thm:SR_complete_char}
Let $c = (c_1,\ldots,c_n) \in \Stablen$ be a stable configuration on $K_n$. Then $c$ is SR if, and only if:
\beq\label{eq:SR_complete_char}
\forall A \subseteq [n], \, c(A) \geq \binom{\vert A \vert}{2}.
\eeq
\end{theorem}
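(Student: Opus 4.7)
The plan is to observe that Theorem~\ref{thm:SR_complete_char} is essentially a direct specialisation of Condition~(iii) of the general characterisation in Theorem~\ref{thm:SR_general}, and so the proof amounts to identifying the right-hand side $|E(G(A))|$ for the complete graph.

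First I would recall that by Theorem~\ref{thm:SR_general}, a stable configuration $c$ is SR on a graph $G$ if and only if for every $A \subseteq [n]$ we have $c(A) \geq |E(G(A))|$. Note that $A$ ranges over subsets of the non-sink vertices $[n]$, which matches exactly the quantification in the statement of Theorem~\ref{thm:SR_complete_char}. So the only thing left to verify is the value of $|E(G(A))|$ in the case $G = K_n^0$.

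Next I would observe that since $K_n^0$ contains one edge between every pair of distinct vertices in $[n]_0$, the induced subgraph $K_n^0(A)$ on any $A \subseteq [n]$ is itself a complete graph on $|A|$ vertices. Hence
\[
\bigl|E(K_n^0(A))\bigr| = \binom{|A|}{2}.
\]
Substituting this into Condition~(iii) of Theorem~\ref{thm:SR_general} yields precisely Inequality~\eqref{eq:SR_complete_char}, and conversely Inequality~\eqref{eq:SR_complete_char} for every $A \subseteq [n]$ is exactly what Condition~(iii) requires for $G = K_n^0$.

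There is no real obstacle here: all the work has already been done in establishing Theorem~\ref{thm:SR_general}, and the statement for complete graphs is a cosmetic rewriting using $|E(K_n^0(A))| = \binom{|A|}{2}$. The only small point worth being explicit about is that Theorem~\ref{thm:SR_general}(iii) quantifies over $A \subseteq [n]$ (non-sink vertices), which is why the sink plays no explicit role in Theorem~\ref{thm:SR_complete_char}; this is consistent with the convention, stated at the start of Section~\ref{sec:sandpile_complete}, of working directly with $K_n$ rather than $K_n^0$.
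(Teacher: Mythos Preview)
Your proposal is correct and matches the paper's approach: the paper does not give a separate proof of Theorem~\ref{thm:SR_complete_char} but simply introduces it as a restatement of Theorem~\ref{thm:SR_general} specialised to the complete graph, which is exactly what you do. (Minor note: the paper's text refers to ``Condition~(ii)'' of Theorem~\ref{thm:SR_general}, but as you correctly identify, the relevant condition is in fact~(iii), the forbidden-subconfiguration form $c(A) \geq |E(G(A))|$.)
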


A more convenient characterisation that will allow us to exhibit the \emph{stochastic burning algorithm} (Algorithm~\ref{algo:sto_burning}) is the following.

\begin{proposition}\label{pro:SR_complete_inc}
Let $c = (c_1,\ldots,c_n) \in \Stablen$ be a stable configuration on $K_n$, and $\inc{c}$ its non-decreasing rearrangement. Then $c$ is SR if, and only if:
\beq\label{eq:SR_complete_inc}
\forall\, i \in [n], \, \sum\limits_{j=1}^i \inc{c}_j = \inc{c}([i]) \geq \binom{i}{2}.
\eeq
\end{proposition}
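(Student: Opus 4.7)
The plan is to derive Proposition~\ref{pro:SR_complete_inc} directly from Theorem~\ref{thm:SR_complete_char}, which already characterises SR states via the inequalities $c(A) \geq \binom{|A|}{2}$ for all $A \subseteq [n]$. The key observation is that, for a fixed subset size $i$, the minimum value of $c(A)$ over all subsets $A \subseteq [n]$ with $|A| = i$ is exactly the $i$-th prefix sum $\inc{c}([i])$ of the non-decreasing rearrangement. Thus the apparently stronger family of $2^n$ inequalities in Theorem~\ref{thm:SR_complete_char} collapses to the $n$ inequalities in~\eqref{eq:SR_complete_inc}.

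For the forward direction, I would assume $c$ is SR and fix $i \in [n]$. Let $\sigma$ be a permutation of $[n]$ witnessing the non-decreasing rearrangement, so that $\inc{c}_j = c_{\sigma(j)}$, and set $A := \{\sigma(1), \ldots, \sigma(i)\}$. Then $|A| = i$ and $c(A) = \sum_{j=1}^i \inc{c}_j$, so applying Theorem~\ref{thm:SR_complete_char} to this particular $A$ gives~\eqref{eq:SR_complete_inc}.

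For the converse, assume~\eqref{eq:SR_complete_inc} holds and let $A \subseteq [n]$ be arbitrary with $|A| = i$. Since $\inc{c}_1 \leq \inc{c}_2 \leq \cdots \leq \inc{c}_n$, any sum of $i$ entries of $c$ is at least the sum of the $i$ smallest entries, i.e.\ $c(A) \geq \inc{c}([i])$. Combined with the hypothesis this yields $c(A) \geq \binom{i}{2} = \binom{|A|}{2}$, so by Theorem~\ref{thm:SR_complete_char} the configuration $c$ is SR.

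There is no real obstacle here; the statement is essentially a reformulation of Theorem~\ref{thm:SR_complete_char} using the elementary rearrangement fact that prefix sums of a sorted sequence are the minima of sums over subsets of prescribed size. The only small care needed is checking that taking the $i$ smallest values really does pick out a valid subset $A$ of $[n]$ of size $i$, which is immediate via the sorting permutation $\sigma$. One could alternatively appeal to Proposition~\ref{pro:comp_inc_sym} to reduce first to the case $c = \inc{c}$ and then apply Theorem~\ref{thm:SR_complete_char} to prefix sets $A = [i]$, but the direct argument above is cleaner and makes no use of symmetry beyond what is already in the rearrangement.
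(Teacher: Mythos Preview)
Your proof is correct and follows essentially the same approach as the paper: both directions reduce Theorem~\ref{thm:SR_complete_char} to prefix sums of the sorted configuration via the observation that the $i$ smallest entries minimise $c(A)$ over subsets of size $i$. The only cosmetic difference is that the paper invokes Proposition~\ref{pro:comp_inc_sym} to assume $c = \inc{c}$ and then works with prefix sets $[i]$, whereas you work directly with the sorting permutation $\sigma$ --- exactly the alternative you mention in your final paragraph.
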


\begin{proof}
If $c$ is SR, then so is $\inc{c}$ by Proposition~\ref{pro:comp_inc_sym}. Applying Equation~\eqref{eq:SR_complete_char} to $\inc{c}$ and subsets $[i]$ immediately gives Equation~\eqref{eq:SR_complete_inc}.

Conversely, suppose that $c$ is such that $\inc{c}$ satisfies Equation~\eqref{eq:SR_complete_inc}. To simplify notation, by Proposition~\ref{pro:comp_inc_sym} we may assume that $c = (c_1, \ldots, c_n) = \inc{c}$. Let $A \subseteq [n]$ and write $A = \{i_1, \ldots, i_k\} $ with $i_1 < \cdots < i_k$. By construction, we have $i_j \geq j$ for all $j \in \{1, \ldots, k\}$, and thus $c_{i_j} \geq c_j$ since $c$ is assumed to be non-decreasing. This implies that
$$ \begin{array}{r c l}
c(A) & = & \sum\limits_{j=1}^k c_{i_j}\\
     & \geq & \sum\limits_{j=1}^k c_{j}\\[2pt]
     & \geq & \binom{k}{2}\\[5pt]
     & = & \binom{\vert A \vert}{2},
\end{array} $$
where we applied Equation~\eqref{eq:SR_complete_inc} in the third line. This shows that $c$ is SR by Theorem~\ref{thm:SR_complete_char}.
\end{proof}

Proposition~\ref{pro:SR_complete_inc}  leads us to define the stochastic burning algorithm on the complete graph $K_n$.

\begin{algorithm}[Stochastic burning algorithm]\label{algo:sto_burning}
Input: $c \in \Stablen$, a stable configuration on $K_n$.
\begin{enumerate}
\item \textbf{Step 1}. Obtain the non-decreasing rearrangement $\inc{c}$ of $c$. Define $\mathrm{Sum} := \sum\limits_{i=1}^n c_i$ and $\mathrm{Target} := \binom{n}{2}$. Initialise $k:=n$.
\item \textbf{Step 2}. While $\mathrm{Sum} \geq \mathrm{Target}$ and $k>0$, do:
  \begin{itemize}[topsep=0pt, noitemsep]
  \item $\mathrm{Sum} \leftarrow \mathrm{Sum} - \inc{c}_k$,
  \item $\mathrm{Target} \leftarrow \mathrm{Target} - (k-1)$
  \item $k \leftarrow k-1$.
  \end{itemize}
 This step is equivalent to burning the vertex with the $k$-th smallest number of grains in $c$.
\item \textbf{Step 3}. Output $k$, the number of unburned vertices.
\end{enumerate}
\end{algorithm}

\begin{theorem}\label{thm:sto_burning}
Let $c \in \Stablen$ be a stable configuration on $K_n$. Then $c$ is SR if, and only if, all vertices are burned in the stochastic burning algorithm, that is, Algorithm~\ref{algo:sto_burning} outputs $k=0$.\\
Moreover, the stochastic burning algorithm runs in linear $O(n)$ time.
\end{theorem}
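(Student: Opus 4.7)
The plan is to recognise that the stochastic burning algorithm is a direct iterative test of the system of inequalities from Proposition~\ref{pro:SR_complete_inc}. Correctness then reduces to verifying a loop invariant, and the running-time bound follows from standard sorting and summation costs.

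First I would establish the invariant that, at the start of each iteration of Step~2, we have $\mathrm{Sum} = \sum_{j=1}^{k} \inc{c}_j = \inc{c}([k])$ and $\mathrm{Target} = \binom{k}{2}$. At initialisation (when $k=n$) this is exactly what Step~1 sets. The updates in the body of Step~2 preserve it because $\inc{c}([k]) - \inc{c}_k = \inc{c}([k-1])$ and $\binom{k}{2} - (k-1) = \binom{k-1}{2}$. Given this invariant, the test $\mathrm{Sum} \geq \mathrm{Target}$ performed at the top of each iteration is precisely the inequality $\inc{c}([k]) \geq \binom{k}{2}$ from Proposition~\ref{pro:SR_complete_inc} with $i=k$.

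For the characterisation, the loop can only exit in one of two ways: either $k$ reaches $0$ (meaning the test succeeded for every value $k=n,n-1,\ldots,1$), or the test first fails for some $k\geq 1$. Consequently, Algorithm~\ref{algo:sto_burning} outputs $k=0$ if and only if $\inc{c}([i]) \geq \binom{i}{2}$ holds for every $i \in [n]$, which by Proposition~\ref{pro:SR_complete_inc} is equivalent to $c$ being SR.

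For the complexity, Step~1 is dominated by the cost of producing $\inc{c}$: any standard comparison sort achieves $O(n\log n)$, and if $c$ is already monotone (non-decreasing or non-increasing) then $\inc{c}$ is available in $O(n)$ via a single linear pass (with a reversal if needed). Computing the initial value of $\mathrm{Sum}$ is also $O(n)$. Step~2 performs at most $n$ iterations, each doing $O(1)$ arithmetic operations and one comparison, and Step~3 is $O(1)$. This yields the advertised $O(n\log n)$ bound in general and $O(n)$ for ordered inputs. There is no real obstacle here: once the invariant in the second paragraph is written down, both the characterisation and the running-time bound are essentially immediate.
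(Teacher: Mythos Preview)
Your proof is correct and follows essentially the same approach as the paper: both reduce correctness to Proposition~\ref{pro:SR_complete_inc} via the identity $\binom{k}{2}-(k-1)=\binom{k-1}{2}$, and both attribute the $O(n\log n)$ bound to the sorting step with the remaining work linear. Your version is simply more explicit, spelling out the loop invariant and the two exit cases where the paper leaves these implicit.
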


\begin{proof}
The characterisation follows from Proposition~\ref{pro:SR_complete_inc} and the straightforward observation that $\binom{k}{2} - (k-1) = \binom{k-1}{2}$. For the complexity, we note that since the configuration $c$ is bounded above by $n$ (we have $c_i \leq n-1$ for all $i \in [n]$), sorting can be done in linear time using the $\mathrm{CountSort}$ algorithm (also known as \emph{sort by values}, see e.g.~\cite[proof of Proposition~13]{CLB_Baker}). In brief, this algorithm first computes an auxiliary array $a = (a_0, \dots, a_{n-1})$ where $a_j = \left\vert \{i \in [n]; c_i = j \} \right\vert$ for all $j$. This can be done straightforwardly in linear time by simply looping over $c$ and incrementing $a_{c_i}$ by one at each step. We then recover $\inc{c}$ from $a$ in linear time by looping over $a$ and appending $a_j$ times the value $j$ to $\inc{c}$ at each step (since $\sum a_j = n$ this is indeed linear). Finally, once the sorting is done, the algorithm itself requires only a linear number of calculations.
\end{proof}

\begin{remark}\label{rem:sto_burning_only_complete}
The stochastic burning algorithm only works in the complete graph case, where this high degree of symmetry allows us to reduce configurations to their non-decreasing rearrangements. A natural attempt at generalisation would be to successively burn maximal vertices. That is, we check the condition $c(A) \geq \big\vert E(G(A)) \big\vert$ (Condition~(iii) from Theorem~\ref{thm:SR_general}) for subsets $A$ obtained by successively burning vertices with the maximal number of grains. This however fails fairly obviously, since we can ``artificially'' inflate the number of grains at a given vertex $v$ by simply connecting an arbitrary number of new vertices to $v$ (these new vertices all have degree $1$). 

A less naive attempt would be to instead successively burn vertices with minimal \emph{lacking} number, where the lacking number of a vertex is equal to its degree minus its number of grains. However, this also fails. For a counter-example, consider the configuration in Figure~\ref{fig:counter_sto_burn} below. For convenience, we don't represent the sink, but we may assume that it is connected to each vertex by a single edge. The configuration, read from left-to-right, is $(0, 5, 5, 5)$, and the lacking numbers are $(7-0, 7-5, 8-5, 8-5) = (7, 2, 3, 3)$. The two left-most vertices (in red) form a forbidden subconfiguration in terms of Theorem~\ref{thm:SR_general}, Condition~(iii), since there are $5$ grains in total but $6$ edges between the two. This implies that the configuration is not SR. One can check that this is the only such forbidden sub-configuration for the SSM. But now, notice that the vertex with minimal lacking number is part of this forbidden subconfiguration. As such, if we were to burn vertices successively in (increasing) order of their lacking number, this would be the first vertex to be burned, and so there is no possibility of the algorithm terminating on the forbidden subconfiguration: it would simply burn through it!

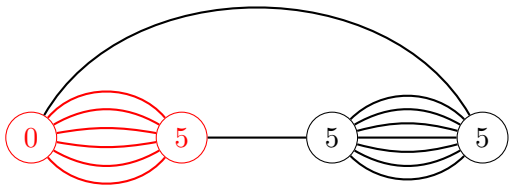
\begin{figure}[ht]
 \centering
 \begin{tikzpicture}
 \node [draw, circle, red] (1) at (0,0) {$0$};
 \node [draw, circle, red] (2) at (2,0) {$5$};
 \node [draw, circle] (3) at (4,0) {$5$};
 \node [draw, circle] (4) at (6,0) {$5$};
 \draw [thick] (2)--(3)--(4);
 \draw [thick, red] (1) [out=10, in=170] to (2);
 \draw [thick, red] (1) [out=-10, in=-170] to (2);
 \draw [thick, red] (1) [out=30, in=150] to (2);
 \draw [thick, red] (1) [out=-30, in=-150] to (2);
 \draw [thick, red] (1) [out=50, in=130] to (2);
 \draw [thick, red] (1) [out=-50, in=-130] to (2);
 \draw [thick] (3) [out=15, in=165] to (4);
 \draw [thick] (3) [out=-15, in=-165] to (4);
 \draw [thick] (3) [out=30, in=150] to (4);
 \draw [thick] (3) [out=-30, in=-150] to (4);
 \draw [thick] (3) [out=45, in=135] to (4);
 \draw [thick] (3) [out=-45, in=-135] to (4);
 \draw [thick] (1) [out=60, in=120] to (4);
 \end{tikzpicture}
 \caption{A counter-example to a stochastic burning algorithm which successively burns vertices with minimal lacking number. The sub-configuration on the two left-most vertices (in red) is a forbidden sub-configuration, but all vertices would be burned. \label{fig:counter_sto_burn}}
\end{figure}
\end{remark}

The question of an (efficient) stochastic burning algorithm on general graphs remains a significant open problem. In fact, it remains an open problem to improve the existing (trivial) complexity bound of $O\left( 2^{\vert V \vert} \right)$ resulting from checking Condition~(iii) of Theorem~\ref{thm:SR_general} for all subsets $A \subset [n]$.

\subsection{Minimal SR states on $K_n$: tournaments and spanning forests}\label{subsec:SSM_complete_minrec}

We now briefly study minimal SR states on the complete graph. An $n$-\emph{tournament} is an orientation of the complete graph $K_n$ for some $n$. From Theorem~\ref{thm:char_minrec_general_orientation} and our remarks at the beginning of Section~\ref{sec:sandpile_complete}, we know that there is a bijection between minimal SR states on $K_n$ and the score-equivalent classes of $n$-tournaments. 

A \emph{spanning forest} of a graph $G = (V,E)$ is a cycle-free subgraph $F = (V,E)$ (or equivalently, a collection of disjoint trees such that the union of their vertices is equal to $V$). The following result has been known for some time in graph theory, see for instance~\cite{KW} for a bijective proof.

\begin{theorem}\label{thm:score_vect_spanning_forests}
The number of score-equivalent orientations of a graph $G$ is equal to its number of spanning forests.
\end{theorem}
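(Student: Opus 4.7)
The plan is to prove the theorem by induction on $|E(G)|$, via a deletion-contraction identity. Write $s(G)$ for the number of score-equivalence classes of orientations of $G$ and $f(G)$ for the number of spanning forests. Both equal $1$ on an edgeless graph, which is the base case. It is standard (partition spanning forests by whether they contain $e$ or not) that $f(G) = f(G - e) + f(G / e)$ for any edge $e \in E(G)$, so it suffices to establish the matching identity
\beq\label{eq:sde_plan}
s(G) = s(G - e) + s(G / e).
\eeq

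Fix $e = \{u, v\}$ and partition the score-equivalence classes of $G$ into two types: \emph{Type A}, in which every orientation assigns the same direction to $e$, and \emph{Type B}, in which both directions of $e$ appear; let $n_A, n_B$ be the respective counts, so that $s(G) = n_A + n_B$. Restriction to $G - e$ sends a Type A class to a single class of $G - e$, and a Type B class to exactly two (one for each direction of $e$). Conversely, every class of $G - e$ has two lifts to orientations of $G$, and those lifts lie in distinct classes of $G$ since their scores differ by a unit swap at $\{u, v\}$. Double counting the pairs $(C, C')$ where $C$ is a class of $G$ and $C'$ is a class of $G - e$ hit by some restriction of $C$ then yields $2\, s(G-e) = n_A + 2 n_B$, hence $s(G) = 2\, s(G-e) - n_B$.

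Next I would relate $n_B$ to $s(G/e)$. Orientations of $G/e$ correspond bijectively to orientations of $G - e$ via merging $u$ and $v$ into a single vertex, and two $G-e$-scores give the same $G/e$-score iff they agree off $\{u, v\}$ and have equal sum at $\{u, v\}$. Hence $s(G/e)$ counts equivalence classes $C$ of $G-e$-scores under this relation. For each $C$, let $K_C$ denote the set of values attained by $\mathrm{in}_u^{\OO}$ over all orientations $\OO$ of $G-e$ whose score lies in $C$. Then by construction $n_B$ counts the pairs of consecutive integers $(k, k+1)$ with both $k, k+1 \in K_C$, summed over $C$. The crucial combinatorial input is an \emph{interval lemma}: each $K_C$ is a set of consecutive integers. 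Granting this, $n_B = \sum_C (|K_C| - 1) = s(G-e) - s(G/e)$, and substitution yields \eqref{eq:sde_plan}.

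The main obstacle is the interval lemma. To prove it, take orientations $\OO_1, \OO_2$ with scores in the same class $C$ and $\mathrm{in}_u^{\OO_1} = k_1 < k_2 = \mathrm{in}_u^{\OO_2}$, and consider the subgraph $D \subseteq E(G-e)$ of edges oriented differently in $\OO_1$ and $\OO_2$. A short degree-balance check shows that, when $D$ is directed according to $\OO_1$, the vertex $u$ has out-excess $k_2 - k_1$, the vertex $v$ has in-excess $k_2 - k_1$, and every other vertex is balanced. A standard Eulerian decomposition then splits $D$ into $k_2 - k_1$ edge-disjoint directed $u$-to-$v$ trails together with some directed cycles. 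Reversing the edges of a single such trail in $\OO_1$ yields an orientation of $G-e$ still in class $C$ but with $\mathrm{in}_u$ increased by exactly $1$, and iterating fills out the entire interval $[k_1, k_2]$, completing the lemma and the induction.
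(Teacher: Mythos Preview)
Your deletion--contraction argument is correct. The paper itself does not prove this theorem; it treats it as a known graph-theoretic fact and cites an external reference for a \emph{bijective} proof. Your route is therefore genuinely different: rather than exhibiting an explicit bijection between score classes and spanning forests, you verify that $s(G)$ satisfies the same recursion $s(G)=s(G-e)+s(G/e)$ as the forest count $f(G)$, with matching initial values on edgeless graphs.

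The only nontrivial step is the interval lemma, and the Eulerian-trail argument you give is sound: the symmetric-difference digraph $D$ (oriented as in $\OO_1$) has out-excess $k_2-k_1$ at $u$, in-excess $k_2-k_1$ at $v$, and is balanced elsewhere, so it decomposes into $k_2-k_1$ edge-disjoint directed $u\!\to\! v$ trails plus closed walks; reversing a single trail shifts $\mathrm{in}_u$ by exactly one while preserving the $G/e$-score class. One point worth spelling out in a formal write-up is the handling of loops created in $G/e$ when $e$ lies in a multi-edge: both $s$ and $f$ are unchanged by deleting loops, so the induction can be applied to the loopless contraction without difficulty.

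As for what each approach buys: the cited bijective proof gives an explicit structural correspondence, useful if one later wants to refine the identity or track statistics across it. Your recursion is more elementary, makes the extension to multigraphs transparent, and situates the result in the Tutte-polynomial framework (both sides being $T_G(2,1)$).
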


This immediately gives another enumeration of minimal SR states.

\begin{proposition}\label{pro:minSR_spanning_forests}
The number of minimal SR states on $K_n$ is equal to the number of labelled spanning forests on $n$ nodes.
\end{proposition}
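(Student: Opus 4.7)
The plan is to chain together three bijective statements already established in the paper, with essentially no new work required. First, I would invoke Theorem~\ref{thm:char_minrec_general_orientation} in the SR case to identify minimal SR states on $K_n^0$ with the score-equivalence classes of sink-rooted orientations of $K_n^0$. Second, as pointed out at the start of Section~\ref{sec:sandpile_complete}, sink-rooted orientations of $K_n^0$ are in canonical bijection with orientations of $K_n$ (that is, $n$-tournaments), obtained by restricting to the non-sink edges: since the sink must be the unique root, every edge $\{0,i\}$ is forced to point toward the sink, and the only genuine choice is how to orient the edges among non-sink vertices. Third, I would apply Theorem~\ref{thm:score_vect_spanning_forests} with $G = K_n$ to conclude that the number of score-equivalence classes of $n$-tournaments equals the number of spanning forests of $K_n$, which is by definition the number of labelled spanning forests on $n$ nodes.

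The only step that actually needs a brief verification is that the bijection between sink-rooted orientations of $K_n^0$ and $n$-tournaments respects score-equivalence. For this, I would observe that for any non-sink vertex $i \in [n]$, the sink-edge $\{0,i\}$ is outgoing at $i$ in every sink-rooted orientation, so it contributes $0$ to $\In{i}$; hence the in-degrees at non-sink vertices are preserved under the restriction operation. At the sink itself, $\In{0} = n$ in every sink-rooted orientation, so score-equivalence at vertex $0$ is automatic. Consequently, two sink-rooted orientations of $K_n^0$ are score-equivalent if and only if the corresponding tournaments of $K_n$ are score-equivalent, and the three bijections compose cleanly.

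There is no real obstacle here — the main combinatorial content has already been done in Theorems~\ref{thm:char_minrec_general_orientation} and \ref{thm:score_vect_spanning_forests}, and the proposition is essentially a corollary obtained by assembling them in the complete-graph setting.
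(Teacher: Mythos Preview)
Your proposal is correct and follows essentially the same approach as the paper: the paper also derives the proposition as an immediate consequence of chaining Theorem~\ref{thm:char_minrec_general_orientation} with the observation at the start of Section~\ref{sec:sandpile_complete} (identifying sink-rooted orientations of $K_n^0$ with $n$-tournaments) and then applying Theorem~\ref{thm:score_vect_spanning_forests}. Your additional verification that the restriction map respects score-equivalence is a nice explicit detail that the paper leaves implicit.
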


\begin{remark}\label{rem:minSR_spanning_forests}
In fact, Proposition~\ref{pro:minSR_spanning_forests} holds in a more general setting than that of the complete graphs $K_n$. It holds in any graph where the sink plays no role in enumerating the minimal SR states. Thus, for any graph $G$ where every non-sink vertex has at least one edge to the sink $0$, the number of minimal SR states on $G$ is equal to the number of spanning forests of $G \setminus \{0\}$ (the graph $G$ with the sink removed). Conversely, if this equality holds, then every non-sink vertex in $G$ must have at least one edge to the sink.
\end{remark}

Because of the symmetries of $K_n$, it is natural as we have seen to consider non-decreasing rearrangements of recurrent states. We can do this for minimal SR states, i.e. consider configurations $c = (c_1, \ldots, c_n)$ that are minimal SR, and weakly increasing ($c_1 \leq c_2 \leq \cdots \leq c_n$). In the context of complete graphs, these are known in the literature as \emph{tournament scores} (a tournament score is the weakly increasing re-arrangement of the in-degree sequence of a tournament). The characterisation of a tournament score was originally given by Landau~\cite{Land} and is essentially Proposition~\ref{pro:SR_complete_inc} to which are added stability and minimality conditions. In recent work~\cite{CDFS}, Claesson et al.\ proved an elegant enumeration formula for the number of tournament scores, and their result yields an algorithm that generates these numbers in $O\!\left( n^2 \right)$ time ($n$ is the size of the tournament). Thus, we can generate the number of non-decreasing minimal SR states on $K_n$ in quadratic $O\!\left( n^2 \right)$ time.


\section{Characterisation of $\StoRecn$}\label{sec:main_result}

In this section we state and prove our main characterisation theorem: that the SR states on $K_n$ are given by the integer lattice points of the DR polytope $\DRPoly{n}$. We also give a few consequences of this result.

\subsection{Statement of the result and consequences}\label{subsec:statement_result}

Recall that the $n$-dimensional DR polytope $\DRPoly{n}$ is the polytope of the set $\DetRecn$ of DR states on $K_n$, and that the $n$-dimensional parking function polytope $\PFPoly{n}$ is the polytope of the set $\PF{n}$ of $n$-parking functions. Note that the bijection from Theorem~\ref{thm:detrec_pf} between the sets $\DetRecn$ and $\PF{n}$ extends immediately to a bijection between integer lattice points in the polytopes $\DRPoly{n}$ and $\PFPoly{n}$. The main result of this section is the following.

\begin{theorem}\label{thm:SR_DRPoly}
The set $\StoRecn$ of SR states for the SSM on the complete graph $K_n$ is the set of integer lattice points (points with integer coordinates) in the DR polytope $\DRPoly{n}$. In other words, a configuration $c=(c_1,\ldots,c_n) \in \Confign$ is SR if, and only if, there exist DR states $c^{(1)},\ldots,c^{(k)} \in \DetRecn$, and scalars $\lambda_1,\ldots,\lambda_k \in [0,1]$ with $\sum\limits_{i=1}^k \lambda_i = 1$, such that $c = \sum\limits_{i=1}^k \lambda_i c^{(i)}$, where the sum and scalar multiplication are the usual pointwise operations in $\R^n$.
\end{theorem}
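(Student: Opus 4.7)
My plan is to deduce the theorem from the polytope identity $\DRPoly{n} = \mathcal{P}$, where $\mathcal{P}$ is the polytope cut out by the SR inequalities of Theorem~\ref{thm:SR_complete_char} together with the stability bounds:
\[ \mathcal{P} := \bigl\{ x \in \R^n \,:\, 0 \le x_i \le n - 1 \text{ for all } i \in [n],\ \textstyle\sum_{i \in A} x_i \ge \binom{|A|}{2} \text{ for all } A \subseteq [n] \bigr\}. \]
By Theorem~\ref{thm:SR_complete_char}, the integer lattice points of $\mathcal{P}$ are precisely $\StoRecn$, so once $\DRPoly{n} = \mathcal{P}$ is established, the theorem follows.

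The inclusion $\DRPoly{n} \subseteq \mathcal{P}$ is immediate: every DR state is also SR (as noted in the remark after Theorem~\ref{thm:SR_general}), so satisfies the inequalities defining $\mathcal{P}$; being linear, these are preserved under convex combinations.

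For the reverse inclusion $\mathcal{P} \subseteq \DRPoly{n}$, my strategy is to describe the vertices of $\mathcal{P}$ explicitly and verify that each one is a DR state, so that $\mathcal{P} = \mathrm{conv}(\text{vertices of } \mathcal{P}) \subseteq \mathrm{conv}(\DetRecn) = \DRPoly{n}$. Applying the change of variables $y := (n-1)\mathbf{1} - x$ transports $\mathcal{P}$ to the polytope
\[ \mathcal{P}_g := \bigl\{ y \in \R^n_{\ge 0} \,:\, \textstyle\sum_{i \in A} y_i \le g(A) \text{ for all } A \subseteq [n] \bigr\}, \qquad g(A) := (n-1)|A| - \textstyle\binom{|A|}{2}, \]
where the bound $y_i \le n-1$ drops out as it is already implied by the singleton constraint $g(\{i\}) = n - 1$. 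Because $g(A)$ depends only on $|A|$ via the map $k \mapsto (n-1)k - \binom{k}{2}$, which is concave (its discrete first difference $n - 1 - k$ is decreasing), $g$ is a submodular, monotone, normalised set function, so $\mathcal{P}_g$ is a polymatroid polytope. By Edmonds' greedy description of polymatroid vertices, the extreme points of $\mathcal{P}_g$ are the vectors $y^{\pi, k}$ indexed by pairs $(\pi, k)$ with $\pi \in \mathcal{S}_n$ and $k \in \{0, 1, \ldots, n\}$, defined by $y^{\pi, k}_{\pi(i)} = g(i) - g(i-1) = n - i$ for $i \le k$ and $y^{\pi, k}_{\pi(i)} = 0$ for $i > k$. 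Translating back, the vertices of $\mathcal{P}$ are the vectors $x^{\pi, k}$ with $x^{\pi, k}_{\pi(i)} = i - 1$ for $i \le k$ and $x^{\pi, k}_{\pi(i)} = n - 1$ for $i > k$. The non-decreasing rearrangement of $x^{\pi, k}$ reads $(0, 1, \ldots, k-1, n-1, \ldots, n-1)$ (with $n-1$ appearing $n-k$ times), whose $i$-th entry is $\ge i - 1$ for every $i \in [n]$, so Corollary~\ref{cor:detrec_comp_burn} confirms that $x^{\pi, k} \in \DetRecn$. This yields the desired inclusion.

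The main obstacle I foresee is justifying Edmonds' vertex description for $\mathcal{P}_g$. I would cite this as a standard result from combinatorial optimisation, though a self-contained proof is feasible by showing directly that any $y \in \mathcal{P}_g$ admits an iterative greedy decomposition into the $y^{\pi, k}$. A more elementary alternative, avoiding polymatroid machinery entirely, would be induction on the level $\level(c)$: the base case $\level(c) = 0$ reduces to the classical identification of non-decreasing minimal SR states (weakly increasing tournament scores) with integer lattice points in the permutohedron of $(0, 1, \ldots, n - 1)$, whose vertices are the minimal DR states; the inductive step, which would require writing a level-$L$ SR state as a convex combination of DR states and SR states of strictly smaller level, is the delicate part.
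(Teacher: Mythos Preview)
Your approach is correct and genuinely different from the paper's. After the affine change $y_i = (n-1) - x_i$, you recognise $\mathcal{P}$ as the polymatroid associated with the submodular, monotone, normalised set function $g(A) = (n-1)\vert A\vert - \binom{\vert A\vert}{2}$, and Edmonds' greedy description of polymatroid vertices then shows that every extreme point of $\mathcal{P}$ is a DR state; this in fact establishes the polytope identity $\DRPoly{n} = \mathcal{P}$, slightly stronger than the statement about integer points. (A minor phrasing point: your list $y^{\pi,k}$ contains repetitions and non-extreme points, so the precise claim is that every extreme point is of this form, which is what Edmonds gives and is all you need.) The paper instead argues by an elementary strong induction on $n$, with no polymatroid machinery: the key step is a grain-moving lemma (Lemma~\ref{lem:add_grain_to_max}) showing that from any superstable SR state one may shift a grain from the second-largest pile to the largest and remain SR, proved by a short cycle-flipping argument on compatible orientations; iterating this together with a two-term convex identity reduces to the case $c_{\maxi} = n-1$, after which restriction to a smaller complete subgraph and the induction hypothesis finish. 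Because the paper's reduction preserves level, it immediately yields the refinement that level-$L$ SR states lie in the convex hull of level-$L$ DR states (Remark~\ref{rem:spec_level}), giving Corollary~\ref{cor:PermPoly_forests} at $L=0$; your argument does not hand this over directly, though a short extra analysis of how the edges of the polymatroid (which run in directions $e_i$ or $e_i - e_j$) meet the level hyperplanes would recover it. Conversely, your route is shorter and more structural once Edmonds' theorem is taken as given, and it identifies the vertex set of $\DRPoly{n}$ explicitly.
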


From the introductory remark of this section, we immediately get the following.

\begin{corollary}\label{cor:SR_PFPoly}
The number of SR states for the SSM on the complete graph $K_n$ is the number of integer lattice points in the parking function polytope $\PFPoly{n}$.
\end{corollary}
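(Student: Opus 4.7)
The plan is to leverage Theorem~\ref{thm:SR_DRPoly} together with the Cori--Rossin bijection of Theorem~\ref{thm:detrec_pf}, promoted from a bijection on vertex sets to a bijection on integer lattice points of the two polytopes. Specifically, consider the affine map $\phi : \R^n \to \R^n$ defined by $\phi(x) = (n,n,\ldots,n) - x$. By Theorem~\ref{thm:detrec_pf}, $\phi$ restricts to a bijection $\DetRecn \to \PF{n}$.

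Next I would verify that $\phi$ carries the polytope $\DRPoly{n}$ onto the polytope $\PFPoly{n}$. Since $\phi$ is affine, for any DR states $c^{(1)},\ldots,c^{(k)}$ and scalars $\lambda_1,\ldots,\lambda_k \in [0,1]$ with $\sum_i \lambda_i = 1$, one has
\[
\phi\!\left(\sum_{i=1}^k \lambda_i c^{(i)}\right) = \sum_{i=1}^k \lambda_i\bigl(n \cdot \mathbf{1} - c^{(i)}\bigr) = \sum_{i=1}^k \lambda_i\, \phi(c^{(i)}),
\]
so $\phi$ sends convex combinations of DR states to convex combinations of the corresponding parking functions. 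The same argument applied to $\phi^{-1} = \phi$ (it is an involution) yields the reverse inclusion, so $\phi$ is an affine bijection $\DRPoly{n} \to \PFPoly{n}$.

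Because $\phi$ has integer coefficients and integer-valued inverse, a point $x \in \DRPoly{n}$ has integer coordinates if and only if $\phi(x) \in \PFPoly{n}$ does. Therefore $\phi$ induces a bijection between the sets of integer lattice points of $\DRPoly{n}$ and $\PFPoly{n}$, and in particular these two sets have the same cardinality.

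Combining this with Theorem~\ref{thm:SR_DRPoly}, which identifies $\StoRecn$ with the integer lattice points of $\DRPoly{n}$, yields the claim. The argument is essentially a short corollary: there is no real obstacle beyond making the bijection-lifting step explicit, since the non-trivial content lies entirely in Theorem~\ref{thm:SR_DRPoly}.
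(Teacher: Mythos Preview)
Your argument is correct and matches the paper's own reasoning, which simply notes that the bijection $c \mapsto n - c$ of Theorem~\ref{thm:detrec_pf} extends to a bijection between integer lattice points of $\DRPoly{n}$ and $\PFPoly{n}$, then invokes Theorem~\ref{thm:SR_DRPoly}. You have merely made that extension explicit by observing that the map is an affine involution with integer coefficients.
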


The first values of these numbers for $n=1,2,\ldots$ are: $1, 3, 17, 144, 1623, 22804, \ldots$, given by Sequence~A333331 in~\cite{OEIS}. An explicit, if somewhat complicated, enumerative formula for this sequence is given in~\cite[Theorem~5.1]{AW}. Corollary~\ref{cor:SR_PFPoly} provides a new combinatorial interpretation of these numbers.

In fact (see Remark~\ref{rem:spec_level}), our proof in Section~\ref{subsec:proof_result} implies a stronger result than Theorem~\ref{thm:SR_DRPoly}. More specifically, we will show that the result holds when restricting ourselves to recurrent states of a fixed level, i.e. that the set of SR states on $K_n$ with level $k$ is the set of integer lattice points in the convex hull of DR states on $K_n$ with level $k$, for any $k \leq \binom{n}{2}$. A consequence of this, for the special case $k=0$, gives the following well-known result (see Exercise~4.64(a) in~\cite{Stanley}). Recall that the regular $n$-permutohedron is the polytope of the set of $n$-permutations.

\begin{corollary}\label{cor:PermPoly_forests}
The number of integer points in the regular $n$-permutohedron is the number of labelled spanning forests on $n$ vertices.
\end{corollary}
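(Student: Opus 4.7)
The plan is to specialise the level-refined form of Theorem~\ref{thm:SR_DRPoly} indicated in Remark~\ref{rem:spec_level} to level $k = 0$: this says that the set of SR states on $K_n$ with level $0$ coincides with the set of integer lattice points in the convex hull of the DR states on $K_n$ with level $0$. The task then reduces to identifying both sides concretely in terms of permutations and of spanning forests.

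On the DR side, Proposition~\ref{pro:char_minrec_general_level} lets me replace ``level $0$'' with ``minimal'', and Theorem~\ref{thm:char_minrec_general_orientation} puts minimal DR states on $K_n$ in bijection with acyclic sink-rooted orientations of $K_n^0$. As explained at the start of Section~\ref{sec:sandpile_complete}, these are equivalent to acyclic orientations of $K_n$. A standard fact is that acyclic orientations of $K_n$ are in bijection with permutations in $\mathcal{S}_n$ (each permutation induces a total order which consistently orients every edge), and the in-degree sequence associated to $\sigma \in \mathcal{S}_n$ is $\bigl(\sigma^{-1}(1)-1,\ldots,\sigma^{-1}(n)-1\bigr)$, a permutation of $(0,1,\ldots,n-1)$. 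Thus the level-$0$ DR states on $K_n$ are precisely the permutations of $(0,1,\ldots,n-1)$, whose convex hull is the translate $\PermPoly{n} - (1,\ldots,1)$. Since translation by an integer vector preserves the lattice-point count, this convex hull contains exactly the same number of integer points as $\PermPoly{n}$ itself.

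On the SR side, Proposition~\ref{pro:char_minrec_general_level} again identifies level-$0$ SR states with minimal SR states, and Proposition~\ref{pro:minSR_spanning_forests} enumerates these by the number of labelled spanning forests on $n$ nodes. Chaining the two counts via the level-refined theorem then yields the corollary. All of the ingredients are already in place; the only genuine content is the level-refinement of Theorem~\ref{thm:SR_DRPoly}. The main obstacle, therefore, is not in this corollary itself but in ensuring that the proof of Theorem~\ref{thm:SR_DRPoly} in Section~\ref{subsec:proof_result} respects level, i.e.\ that when one writes an SR state $c$ as a convex combination of DR states $c^{(i)}$, all $c^{(i)}$ can be chosen with $\level(c^{(i)}) = \level(c)$; granted this (as asserted in Remark~\ref{rem:spec_level}), the corollary is immediate.
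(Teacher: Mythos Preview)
Your proof is correct and follows essentially the same route as the paper: specialise the level-refined Theorem~\ref{thm:SR_DRPoly} (Remark~\ref{rem:spec_level}) to level $0$, identify level-$0$ SR states with minimal SR states and count these by spanning forests via Proposition~\ref{pro:minSR_spanning_forests}, and identify level-$0$ DR states with permutations so that their convex hull is (an integer-affine image of) $\PermPoly{n}$. The only cosmetic difference is in the last identification: the paper passes through the bijection $c \mapsto n-c$ of Theorem~\ref{thm:detrec_pf}, sending minimal DR states to maximal parking functions (which are permutations), whereas you use the orientation description of Theorem~\ref{thm:char_minrec_general_orientation} to see that minimal DR states are the permutations of $(0,1,\ldots,n-1)$ and hence form a translate of $\PermPoly{n}$; either affine map preserves the lattice-point count.
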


\begin{proof}
Theorem~\ref{thm:SR_DRPoly} restricted to minimal recurrent states shows that the set of minimal SR states on $K_n$ is the set of integer lattice points in the convex hull of the set of minimal DR states on $K_n$. But minimal DR states correspond to maximal parking functions via the bijection of Theorem~\ref{thm:detrec_pf}. Because maximal parking functions are just permutations, this implies that the number of minimal SR states on $K_n$ is the number of integer lattice points in the regular $n$-permutohedron. The result then follows immediately from Proposition~\ref{pro:minSR_spanning_forests}.
\end{proof}

\begin{remark}\label{rem:DetRecPoly_false_general}
Theorem~\ref{thm:SR_DRPoly} is false for general graphs. Consider the graph of Figure~\ref{fig:sto_non_det} with its SR state $c=(1,1,1)$. All DR states for this graph have two grains at the vertex $1$ connected to the sink (this is easily checked by e.g.\ Dhar's burning algorithm), so $c$ cannot be written as a convex sum of DR states.
\end{remark}

\subsection{Proof of Theorem~\ref{thm:SR_DRPoly}}\label{subsec:proof_result}

We first show that an integer lattice point in the DR polytope $\DRPoly{n}$ is SR. Let $c \in \DRPoly{n} \cap \Z^n$ be such a point. We can write $
c = \sum\limits_{i=1}^k \lambda_i c^{(i)} $,
for some DR states $c^{(1)},\ldots,c^{(k)} \in \DetRecn$, and scalars $\lambda_1,\ldots,\lambda_k \in [0,1]$ such that $\sum\limits_{i=1}^k \lambda_i = 1$. Let $A \subseteq [n]$. We have:
$$ \begin{array}{r c l}
c(A) & = & \sum\limits_{i=1}^k \lambda_i c^{(i)}(A)\\
     & \geq & \sum\limits_{i=1}^k \lambda_i \binom{\vert A \vert}{2} \\
     & = & \binom{\vert A \vert}{2},
\end{array} $$
where the inequality on the second line follows from the inclusion $\DetRecn \subseteq \StoRecn$ and applying Theorem~\ref{thm:SR_complete_char} to each $c^{(i)}$. Since this holds for any subset $A \subseteq [n]$, by Theorem~\ref{thm:SR_complete_char}, we have $c \in \StoRecn$ as desired.

\medskip

We now show the converse, that is 
\beq\label{eq:SR_inc_DRPoly}
\StoRecn \subseteq \DRPoly{n}.
\eeq 
We proceed by strong induction on $n$. For $n=1$ or $n=2$, we have $\StoRecn = \DetRecn$ (in these cases $K_n$ is acyclic, so Proposition~\ref{pro:DR_SR_equal} applies), and the result is trivial. Suppose now that Inclusion~\eqref{eq:SR_inc_DRPoly} holds for all $k<n$ for some $n \geq 3$, and let $c = (c_1,\ldots,c_n) \in \StoRecn$ be a SR state on $K_n$.

We first introduce some notation. Let $\maxi \in [n]$, resp.\ $\nextMax \in [n]$, be such that $c_{\maxi} = \max\limits_{i \in [n]} \{ c_i\}$, resp.\ $c_{\nextMax} = \max\limits_{\substack{ i \in [n] \\ i \neq \maxi }} \{ c_i\}$. In words, $\maxi$ is the label of the vertex with the highest number of grains in the configuration $c$, and $\nextMax$ the label of the vertex with the second-highest number. Our first step is a reduction: we show that it is sufficient to consider the case where $c_{\maxi} = n-1$, with the help of the following lemma.

\begin{lemma}\label{lem:add_grain_to_max}
Let $\maxi$ and $\nextMax$ be defined as above, and assume that $c$ is strongly stable, i.e. $c_{\maxi} < n-1$. Define a configuration $c'$ by: 
$$ c'_i = \begin{cases} 
c_{\maxi} + 1 & \text{if } i = \maxi, \\
c_{\nextMax} - 1 & \text{if } i = \nextMax, \\
c_{i} & \text{otherwise}.
\end{cases}$$
Then $c' \in \StoRecn$.
\end{lemma}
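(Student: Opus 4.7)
The plan is to apply Theorem~\ref{thm:SR_complete_char} and verify that $c'(A) \geq \binom{|A|}{2}$ for every $A \subseteq [n]$. Since $c'$ agrees with $c$ outside $\{\maxi, \nextMax\}$, splitting on how $A$ meets this pair reduces matters to a single nontrivial case: $\nextMax \in A$ and $\maxi \notin A$, for which we need the \emph{strict} inequality $c(A) \geq \binom{|A|}{2} + 1$. Indeed, when $A$ contains both or neither of $\maxi, \nextMax$ we have $c'(A) = c(A)$, and when $\maxi \in A$ but $\nextMax \notin A$ we have $c'(A) = c(A) + 1$; in both cases the desired bound follows from Theorem~\ref{thm:SR_complete_char} applied to $c$.

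Before tackling the strict inequality, I will check that $c'$ is stable. Superstability gives $c'_{\maxi} = c_{\maxi} + 1 \leq n-1$, and $c'_{\nextMax} = c_{\nextMax} - 1 \geq 0$ will follow from $c_{\nextMax} \geq 1$. The latter is immediate: if $c_{\nextMax} = 0$ then $c_j = 0$ for every $j \neq \maxi$, so $\sum_i c_i = c_{\maxi} \leq n-2 < \binom{n}{2}$ (using $n \geq 3$ from the inductive context of the proof of Theorem~\ref{thm:SR_DRPoly}), contradicting SR for $c$ at $A = [n]$.

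For the strict bound I split into two subcases. If $A = [n] \setminus \{\maxi\}$, then SR applied to $[n]$ together with superstability yields
\beq
c(A) \, = \, \sum_{i=1}^n c_i - c_{\maxi} \, \geq \, \binom{n}{2} - (n-2) \, = \, \binom{n-1}{2} + 1,
\eeq
as required. Otherwise $A$ is a strict subset of $[n] \setminus \{\maxi\}$, so there exists some ``witness'' $j \in [n] \setminus (A \cup \{\maxi\})$. I argue by contradiction, assuming $c(A) = \binom{k}{2}$, where $k = |A|$. Applying SR to $A \setminus \{i\}$ for each $i \in A$ forces $c_i \leq \binom{k}{2} - \binom{k-1}{2} = k-1$, and in particular $c_{\nextMax} \leq k-1$. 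Applying SR to $A \cup \{j\}$ forces $c_j \geq \binom{k+1}{2} - \binom{k}{2} = k$. But by the defining maximality of $\nextMax$ among labels distinct from $\maxi$, we have $c_j \leq c_{\nextMax} \leq k-1 < k$, a contradiction.

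The main obstacle is precisely this last step: promoting the SR inequality to a strict one on subsets containing $\nextMax$ but avoiding $\maxi$. The stability check and the other cases of $A$ are routine bookkeeping, so the heart of the argument is the rigidity analysis of the equality case $c(A) = \binom{k}{2}$, exploiting the witness $j \notin A \cup \{\maxi\}$ provided by strictness, which squeezes $c_j$ between $k$ and $k-1$.
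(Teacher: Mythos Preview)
Your proof is correct, and it takes a genuinely different route from the paper's. The paper argues via the orientation characterisation (Theorem~\ref{thm:SR_general}, Condition~(ii)): starting from an orientation $\OO$ compatible with $c$, it shows that one may assume $\maxi \orighta \nextMax$ by finding, when necessary, a directed $3$- or $4$-cycle through the edge $\nextMax \orighta \maxi$ and flipping it; the flipped orientation is then compatible with $c'$. Your argument instead works entirely with the subset-sum characterisation of Theorem~\ref{thm:SR_complete_char}, reducing to the single case $\nextMax \in A$, $\maxi \notin A$ and proving the strict inequality $c(A) > \binom{|A|}{2}$ via a clean squeeze: equality would force every element of $A$ (hence $\nextMax$) to carry at most $|A|-1$ grains, while any witness $j \notin A \cup \{\maxi\}$ must carry at least $|A|$, contradicting the second-maximality of $\nextMax$. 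Your approach is more elementary and avoids orientations altogether; the paper's approach is more constructive in that it produces an explicit compatible orientation for $c'$, and its cycle-finding step makes visible the combinatorial reason the grain can be moved.
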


In words, Lemma~\ref{lem:add_grain_to_max} tells us that if $c \in \StoRecn$ is strongly stable, then we can make another SR state from $c$ by moving one grain from vertex $\nextMax$ to vertex $\maxi$.

\begin{proof}
Let $c \in \StoRecn$ such that $c_{\maxi} < n-1$. Note that since $n \geq 3$ we must have $c_{\nextMax} > 0$ since otherwise the set $[n]$ would violate Equation~\eqref{eq:SR_complete_char}. We show that there exists an orientation $\OO$ of $K_n$, compatible with $c$, such that $\maxi \orighta \nextMax$. If such an orientation exists, then let $\OO'$ be the orientation $\OO$ with $\maxi \xleftarrow{\OO'} \nextMax$ and all other edges unchanged. By construction, $\OO'$ is compatible with $c'$, and the lemma is proved. It is thus sufficient to show the existence of such an orientation $\OO$.

Consider an orientation $\OO$ compatible with $c$, and suppose we have $\nextMax \orighta \maxi$ (otherwise there is nothing more to prove). We may also assume that $\In{\nextMax} = c_{\nextMax}$, since otherwise we may simply flip the orientation of $(\nextMax, \maxi)$ without changing the compatibility of the orientation. We will show that we can find a directed cycle of length $3$ or $4$ in $\OO$ which contains the edge $\nextMax \orighta \maxi$ (see Figure~\ref{fig:4-cycle} for how the $4$-cycle is constructed). Flipping this directed cycle then yields the desired orientation.

Since $c_{\maxi} < n-1$ there must exist $i \in [n]$, $i \neq \nextMax$,  such that $\maxi \orighta i$. If $i \orighta \nextMax$, we have a $3$-cycle $\maxi \orighta i \orighta \nextMax \orighta \maxi$. Otherwise consider the set of vertices $j$ such that $j \orighta \nextMax$. By construction there are $c_{\nextMax} > 0$ such vertices. 
And for at least one of these, we must have $i \orighta j$, since otherwise there would be at least $c_{\nextMax} + 1$ incoming edges at $i$ ($c_{\nextMax}$ for those such vertices $j$, plus $1$ for the edge $\maxi \orighta i$). This would imply $c_i \geq \In{i} \geq c_{\nextMax} + 1$, which would contradict the definition of $\nextMax$. Therefore there exists a $4$-cycle $\maxi \orighta i \orighta j \orighta \nextMax \orighta \maxi$.

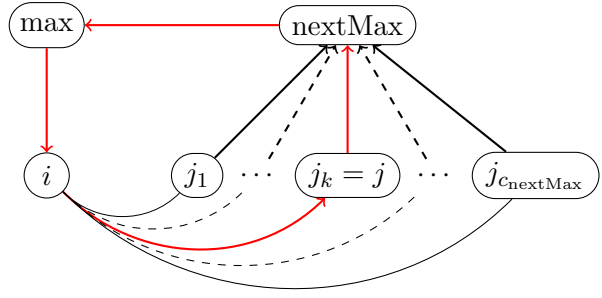
\begin{figure}[ht]
\centering
\begin{tikzpicture}

\node[rounded rectangle, draw=black, minimum size=0.6cm] (max) at (0,0) {$\maxi$};
\node[rounded rectangle, draw=black] (nextMax) at (4,0) {$\nextMax$};
\node[rounded rectangle, draw=black] (j1) at (2,-2) {$j_1$};
\node[rounded rectangle, draw=black] (j) at (4,-2) {$j_k=j$};
\node[rounded rectangle, draw=black] (jlast) at (6.5,-2) {$j_{c_{\nextMax}}$};
\node[xshift=-5] (d1) at (3,-2) {$\cdots$};
\node[xshift=5] (d2) at (5,-2) {$\cdots$};
\node[rounded rectangle, draw=black, minimum size=0.6cm] (i) at (0,-2) {$i$};

\draw[thick, ->,red] (nextMax)--(max);
\draw[thick, ->] (j1)--(nextMax);
\draw[thick, ->,dashed] (d1)--(nextMax);
\draw[thick, ->,red] (j)--(nextMax);
\draw[thick, ->,dashed] (d2)--(nextMax);
\draw[thick, ->] (jlast)--(nextMax);
\draw[thick, ->,red] (max)--(i);
\draw[thick, ->,red] (i) [out=-45, in=-135] to (j);
\draw[dashed] (i) [out=-45, in=-135] to (d1);
\draw[dashed] (i) [out=-45, in=-135] to (d2);
\draw[very thin] (i) [out=-45, in=-135] to (j1);
\draw (i) [out=-45, in=-135] to (jlast);

\end{tikzpicture}

\caption{Illustrating the $4$-cycle construction. Among the edges $(i,j_1),\cdots,\left( i, j_{c_{\nextMax}} \right)$, there must be at least one edge directed $i \orighta j_k$, which forces the necessary directed cycle (in red).
\label{fig:4-cycle}
}

\end{figure}

Thus in all cases we have shown the existence of a directed cycle in $\OO$ containing the edge $\nextMax \orighta \maxi$. Flipping this cycle (and leaving other edges unchanged) gives us the desired compatible orientation $\OO'$ with the edge $\nextMax \xleftarrow{\OO'} \maxi$, which concludes the proof of the lemma.
\end{proof}

We now complete the proof of Theorem~\ref{thm:SR_DRPoly}. Assume first that $c$ is strongly stable, i.e. $c_{\maxi} < n-1$, and let $c'$ be defined as in the statement of Lemma~\ref{lem:add_grain_to_max}. Define $c''$ to be the configuration $c'$ where the values at vertices $\maxi$ and $\nextMax$ are switched, that is:
$$ c''_i = \begin{cases} 
c_{\nextMax} - 1 & \text{if } i = \maxi, \\
c_{\maxi} + 1 & \text{if } i = \nextMax, \\
c_{i} & \text{otherwise}.
\end{cases}$$
Clearly, by symmetry, since $c'$ is SR by Lemma~\ref{lem:add_grain_to_max}, so is $c''$. If we define $k := c_{\maxi} - c_{\nextMax} + 1 \geq 1$, we have:
\beq\label{eq:reduction}
c = \dfrac{k}{k+1} c' + \dfrac{1}{k+1} c''.
\eeq
Thus we have shown that if $c$ is SR and strongly stable, we can write $c$ as a convex sum of two SR states whose maximal number of grains are strictly greater than that of $c$. By iterating this process, we will eventually be able to write $c$ as a convex sum of configurations which are not strongly stable. It is thus sufficient to consider the case where $c$ is not strongly stable, i.e. $c_{\maxi} = n-1$.

Suppose therefore that $c \in \StoRecn$ with $c_{\maxi} = n-1$. To simplify notation, we may assume without loss of generality that $c = \inc{c}$ (since $\DRPoly{n}$ and $\StoRecn$ are invariant under permutation), i.e. $c_1 \leq c_2 \leq c_n = n-1$. We may also assume that $c \notin \DetRecn$. By Corollary~\ref{cor:detrec_comp_burn} we may then define
\beq\label{eq:def_j}
j := \max \{ i \in [n], c_i < i-1 \},
\eeq
and since $c_n = n-1$ we have $j<n$.

By construction, we have that for all $i \leq j$, $c_i \leq c_j < j-1$. Thus the restriction of $c$ to $[j]$, defined by $c_{\big|[j]} := (c_1,\cdots,c_j)$, is a stable configuration on $K_j$. Since Equation~\eqref{eq:SR_complete_char} holds for all $A \subseteq [n]$, it necessarily holds for all $A \subseteq [j]$, and so by Theorem~\ref{thm:SR_complete_char} we have $c_{\big|[j]} \in \mathrm{StoRec}_j$. By the induction hypothesis, we can therefore write:
\beq\label{eq:induction}
c_{\big|[j]} = \sum\limits_{i=1}^k \lambda_i c^{(i)},
\eeq
for $c^{(1)},\ldots,c^{(k)} \in \mathrm{DetRec}_j$, and $\lambda_1,\ldots,\lambda_k \in [0,1]$ with $\sum\limits_{i=1}^k \lambda_i = 1$. Now for $i \in \{1,\ldots,k\}$, define configurations $\tilde{c}^{(i)} \in \Confign$ by
$$ \tilde{c}^{(i)}_{\ell} := \begin{cases}
c^{(i)}_{\ell} & \text{if } \ell \leq j, \\
c_{\ell} & \text{if } \ell > j. \\
\end{cases} $$
By construction, we have $c = \sum\limits_{i=1}^k \lambda_i \tilde{c}^{(i)}$, so it remains to show that $\tilde{c}^{(i)} \in \DetRecn$ for $i \in \{1,\ldots,k\}$. But this follows immediately, using Dhar's burning criterion (Theorem~\ref{thm:DR_general}), from the fact that $c^{(i)} \in \mathrm{DetRec}_j$, and that by construction $c_{\ell} \geq \ell - 1$ for $\ell > j$. Indeed, from this latter fact, we can burn vertices $n,n-1,\ldots,j+1$ in that order, so that we are left with $c^{(i)}$, which is DR, so all remaining vertices are also burned. This concludes the proof of Theorem~\ref{thm:SR_DRPoly}. \qed

\begin{remark}\label{rem:spec_level}
Note that in Equation~\eqref{eq:reduction}, both $c'$ and $c''$ have the same level as $c$. As such, the same proof works when restricting to SR and DR states of a fixed level. That is, the set of SR states with level $k$ (for fixed $k$) is the set of integer lattice points in the polytope of DR states with level $k$. Setting $k=0$ then gives Corollary~\ref{cor:PermPoly_forests}, as explained in Section~\ref{subsec:statement_result}.
\end{remark}


\section{Partially stochastic sandpile models}\label{sec:partial_ssm}

In this section, we introduce a notion of \emph{partially stochastic sandpile model}, in which some of the vertices topple stochastically, and the others deterministically. Let $n \geq 0$ and $k \in [n]_0$. We define the $k$-partial SSM on $K_n$ as the model in which the vertices $\{1, \ldots, k\}$ topple stochastically, i.e. according to Equation~\eqref{eq:part_toppling}, while the vertices $\{k+1, \ldots, n\}$ topple deterministically, i.e. according to Equation~\eqref{eq:full_toppling}. For $k=0$ we get the ASM, while for $k=n$ we get the SSM. We let $\PSR{k}$ denote the set of recurrent states for the $k$-partial SSM, and refer to its elements as $k$-SR states.

Since partial (stochastic) topplings always have a positive probability of being full (deterministic), elementary Markov chain theory gives us the following.

\begin{proposition}\label{pro:inc_part_storec}
The sequence $\left( \PSR{k} \right)_{0 \leq k \leq n}$ is increasing for inclusion, i.e.
\beq\label{eq:inc_part_storec}
\DetRecn = \PSR{0} \subseteq \PSR{1} \subseteq \cdots \subseteq \PSR{n} = \StoRecn.
\eeq
\end{proposition}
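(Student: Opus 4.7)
The plan is to treat the endpoints of the chain first and then establish the intermediate inclusions $\PSR{k} \subseteq \PSR{k+1}$ for $0 \le k < n$. The two endpoints are immediate from the definitions. When $k=0$, no vertex topples stochastically, so the $0$-partial SSM coincides with the ASM and $\PSR{0} = \DetRecn$. When $k=n$, every non-sink vertex topples stochastically, so the $n$-partial SSM coincides with the SSM and $\PSR{n} = \StoRecn$.

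For the intermediate inclusions, I would use the partial analogue of Condition~(i) from Theorem~\ref{thm:SR_general}: a stable configuration $c$ is $k$-SR if and only if there exists $d \in \Confign$ such that $c$ is produced with positive probability by the $k$-partial stabilisation of $\maxc + d$. Fix $c \in \PSR{k}$ and such a witnessing $d$. The $k$-partial and $(k+1)$-partial stabilisation processes, both started from $\maxc + d$, differ only in how vertex $k+1$ topples: deterministically in the former, stochastically in the latter.

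The key observation is that the deterministic toppling of vertex $k+1$ is itself a specific outcome of the stochastic toppling at $k+1$, namely the outcome in which every one of the independent coin flips sends its grain, and this outcome has strictly positive probability. Since any stabilising trajectory in the $k$-partial model performs only finitely many topplings, and in particular finitely many topplings at vertex $k+1$, one can couple the two processes by conditioning each stochastic toppling of vertex $k+1$ in the $(k+1)$-partial process to match the deterministic toppling. The conditioning event has strictly positive probability, and on this event the $(k+1)$-partial stabilisation from $\maxc + d$ follows exactly the same positive-probability trajectory that produces $c$ in the $k$-partial model. Hence $c$ is reached with positive probability in the $(k+1)$-partial model, i.e.\ $c \in \PSR{k+1}$.

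The main, and essentially only, subtlety is an abelian-type property for the partial stabilisation, needed to ensure that the coupling argument above does not depend on a particular toppling order. This can be obtained by adapting the exchange argument used in~\cite{CMS} to justify that $\StoStab$ is well-defined, now allowing the mixture of deterministic and stochastic topplings that arises in the $k$-partial model.
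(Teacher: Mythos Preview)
Your proposal is correct and follows essentially the same approach as the paper: the core idea in both is that a deterministic toppling is a positive-probability outcome of a stochastic toppling, so any trajectory in the $k$-partial model occurs with positive probability in the $(k+1)$-partial model. The paper in fact gives only a one-sentence justification invoking this observation and elementary Markov chain theory, whereas you have spelled out the coupling and the abelian-type subtlety more carefully; but the underlying argument is the same.
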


We know that for $n \geq 3$, we have $\DetRecn \subsetneq \StoRecn$, so at least one of these inclusions must be strict. In fact, we can show the following.

\begin{proposition}\label{pro:part_sto_non_det}
For $n \geq 3$, we have $\DetRecn \subsetneq \PSR{1}$. In words, having just one vertex topple stochastically yields a recurrent state that is not DR.
\end{proposition}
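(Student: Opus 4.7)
The plan is to exhibit, for each $n \geq 3$, a specific configuration that belongs to $\PSR{1}$ but not to $\DetRecn$; since the inclusion $\DetRecn \subseteq \PSR{1}$ is already guaranteed by Proposition~\ref{pro:inc_part_storec}, this will suffice. My candidate is the uniform configuration $c := (n-2, n-2, \ldots, n-2)$. Its preliminary properties are immediate: $c$ is stable ($n-2 < n$); it is SR by Proposition~\ref{pro:SR_complete_inc}, since the bound $i(n-2) \geq \binom{i}{2}$ reduces to $i \leq 2n-3$ and thus holds for all $i \leq n$ when $n \geq 3$; and it fails to be DR by Corollary~\ref{cor:detrec_comp_burn} because $\inc{c}_n = n-2 < n-1$.

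The meat of the proof will be to show $c \in \PSR{1}$. I will do this by constructing an explicit $1$-partial stabilisation path of positive probability from $\maxc + \alpha_1 = (n, n-1, \ldots, n-1)$ to $c$. The path will split into three phases. First, topple vertex~$1$ stochastically with Bernoulli outcomes $B_0 = B_2 = \ldots = B_{n-1} = 1$ and $B_n = 0$ (probability $p^{n-1}(1-p)$), reaching the state $(1, n, n, \ldots, n, n-1)$. Second, topple vertices $2, 3, \ldots, n$ in order deterministically; this cascade will end in the state $(n, n-2, \ldots, n-2, n-3)$. Third, vertex~$1$ will again be unstable (with value $n$), so topple it with $B_0 = B_n = 1$ and $B_j = 0$ for $2 \leq j \leq n-1$ (probability $p^{2}(1-p)^{n-2}$), sending one grain to the sink and one to vertex~$n$, so that the resulting state is exactly $c$.

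The main obstacle I anticipate is the cascade in Phase~2: I will need to verify that each intended deterministic toppling is of a vertex that is unstable at the moment it is applied. A short bookkeeping argument will handle this, by showing that vertex~$k$ (for $2 \leq k \leq n-1$) has value $n + (k-2)$ just before its turn, while vertex~$n$ reaches value $2n-3$, which is at least $n$ precisely when $n \geq 3$. Once this is in hand the remaining verification is routine, and multiplying the per-phase probabilities gives a total of $p^{n+1}(1-p)^{n-1} > 0$, proving $c \in \PSR{1} \setminus \DetRecn$.
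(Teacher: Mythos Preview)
Your argument is correct. The cascade bookkeeping works out exactly as you describe: after toppling $2,\ldots,n-1$ the state is $(n-1,\,n-3,\ldots,n-3,\,2n-3)$, vertex~$n$ is then unstable (since $2n-3\ge n$ for $n\ge3$), and after its deterministic toppling followed by the final stochastic toppling of vertex~$1$ you land on $(n-2,\ldots,n-2)$. This is a legal stabilisation of $\maxc+\alpha_1$ under the $1$-partial rules, occurring with probability $p^{n+1}(1-p)^{n-1}>0$, so $c\in\PSR{1}$ as $\maxc$ is recurrent.

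The paper takes a different witness and a different route. It uses the configuration with $c_1=c_2=c_3=1$ and $c_i=n-1$ for $i>3$ (which coincides with yours only at $n=3$). Rather than building an explicit toppling path, it observes that the nearby configuration $c'=c-\alpha_1+\alpha_2$ is DR, so some $d$ satisfies $\DetStab(\maxc+d)=c'$; since $c'_1=0$, the last toppling in that deterministic sequence must be at vertex~$1$, and replacing it by a stochastic toppling that withholds the grain destined for vertex~$2$ converts $c'$ into $c$. The paper's approach is slicker---it leans on the existing DR theory (Condition~(i) of Theorem~\ref{thm:DR_general}) as a black box and avoids computation---while yours is self-contained and constructive, producing an explicit path and an explicit probability with no appeal to anything beyond the definitions. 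Both ultimately exploit the same phenomenon: one carefully placed stochastic toppling of vertex~$1$ suffices to move from a DR-reachable state to one that is not DR.
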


\begin{proof}
Fix $n \geq 3$. We define a stable configuration $c = (c_1, \ldots, c_n) \in \Stablen$ by $c_1 = c_2 = c_3 = 1$ and $c_i = n-1$ for all $i > 3$ (maximal stable number of grains). First, note that $c$ is not DR, since the induced cycle $\{1, 2, 3\}$ is a forbidden subconfiguration (for Condition~(iii) of Theorem~\ref{thm:DR_general}). It is therefore sufficient to show that $c \in \PSR{1}$, i.e. that $c$ is recurrent for the partially stochastic sandpile model in which vertex $1$ topples stochastically, and all other vertices deterministically.

First, we define a configuration $c'$ by $c' := c - \alpha_1 + \alpha_2$, i..e. $c'_1 = 0$, $c'_2 = 2$, $c'_3 = 1$, and $c'_i = n-1$ for all $i > 3$. We claim that $c'$ is DR by applying Dhar's burning algorithm. After burning the sink, all vertices $i > 3$ are unstable, and can be burned. This leaves us with the cycle $\{1, 2, 3\}$ again, but this time it can be burned in order $2, 3, 1$.

Since $c'$ is DR, there exists a configuration $d = (b_1, \ldots, b_n)$ such that $\DetStab(\maxc + d) = c$ by Condition~(i) of Theorem~\ref{thm:DR_general}. Moreover, since $c'_1 = 0$, the last vertex to topple in the deterministic stabilisation of $\maxc + d$ must be vertex $1$. Indeed, toppling any other vertex sends a grain to $1$, so if the last toppling in the stabilisation is not at $1$ we would have $c'_1 > 0$. 

We now consider the same stabilisation sequence up to the last toppling of $1$ excluded, and change that toppling of $1$ so that it sends a grain to each neighbour except vertex $2$ (this is now a stochastic toppling which has positive probability of occurring). This modifies the resulting configuration by moving one grain from $2$ to $1$ in $c'$, yielding exactly the configuration $c$. We have thus shown that $c$ can be obtained from $\maxc$ through a finite sequence of grain additions and topplings for the $1$-partial SSM, meaning that $c$ is recurrent for this model, as desired.
\end{proof}

We can then look into the right-hand end of the sequence in~\eqref{eq:inc_part_storec}, i.e.\ for which $k$ we have $\PSR{k} = \StoRecn$. First, we show that if at least three vertices topple deterministically, then not all SR configurations are reached.

\begin{proposition}\label{pro:part_sto_non_sr}
Let $n \geq 3$, and define a configuration $c = (c_1, \ldots, c_n) \in \Confign$ by 
$$ c_k = 
\begin{cases} n-1 & \text{if } k \leq n-3,\\
1 & \text{otherwise.} 
\end{cases} $$
Then $c \in \StoRecn \setminus \PSR{n-3}$.
\end{proposition}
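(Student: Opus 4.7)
My plan splits into verifying $c \in \StoRecn$ and then $c \notin \PSR{n-3}$. For the first, I would apply Proposition~\ref{pro:SR_complete_inc} to the non-decreasing rearrangement $\inc{c} = (1, 1, 1, n-1, \ldots, n-1)$ (three $1$s followed by $n-3$ copies of $n-1$), checking the partial-sum inequalities $\sum_{j=1}^i \inc{c}_j \geq \binom{i}{2}$. For $i \leq 3$ the left-hand side equals $i$ and the right-hand side lies in $\{0, 1, 3\}$; for $4 \leq i \leq n$, it equals $3 + (i-3)(n-1)$, and since the function $f(i) := 3 + (i-3)(n-1) - \binom{i}{2}$ is concave in $i$, it suffices to check the endpoints $f(4) = n - 4 \geq 0$ and $f(n) = (n-3)(n-4)/2 \geq 0$, both holding for $n \geq 4$. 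The degenerate case $n = 3$ reduces to $(1,1,1)$ on $K_3$, which is immediate.

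For the second part, I plan to argue by contradiction, exploiting the temporal order of topplings at the three deterministic vertices $D = \{n-2, n-1, n\}$. Suppose $c \in \PSR{n-3}$. Mirroring Condition~(i) of Theorems~\ref{thm:DR_general} and~\ref{thm:SR_general}, recurrence in the partial SSM implies the existence of some $d \in \Confign$ and a sequence of topplings whose partial stabilisation of $\maxc + d$ reaches $c$ with positive probability. Fix such a sequence. Since each deterministic vertex $i \in D$ starts with grain count at least $n - 1 > 1 = c_i$, and grains leave a vertex only via its own topplings, each $i \in D$ must topple at least once. Let $i^\ast \in D$ be the vertex whose \emph{last} toppling occurs earliest in the sequence among the three; then the other two vertices of $D$ each topple strictly after $i^\ast$'s final topple. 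Being deterministic, each of those subsequent topplings sends exactly one grain to $i^\ast$, so $i^\ast$ receives at least two grains after its own last toppling. Writing $a \geq 0$ for the grain count at $i^\ast$ immediately after that toppling, this forces $c_{i^\ast} \geq a + 2 \geq 2$, contradicting $c_{i^\ast} = 1$.

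The main obstacle is justifying the reduction from ``$c \in \PSR{n-3}$'' to the existence of an explicit toppling sequence reaching $c$. This requires an analog of Condition~(i) of Theorem~\ref{thm:SR_general} for the partial SSM, which in turn relies on an Abelian property of partial stabilisation inherited from the stochastic case of~\cite{CMS} (grain additions can be collected upfront, and the joint distribution of topple counts is independent of the toppling order). Once this is granted, the rest of the argument reduces to the grain-counting observation that a vertex with $c_i = 1$ cannot be the ``first to finish'' among $D$ when its two deterministic siblings must each later fire one additional grain into it.
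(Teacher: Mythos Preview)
Your proof is correct and follows essentially the same approach as the paper: the contradiction argument for $c \notin \PSR{n-3}$ via the earliest-finishing vertex among $\{n-2,n-1,n\}$ is exactly the paper's reasoning. The only difference is that for $c \in \StoRecn$ you verify the partial-sum criterion of Proposition~\ref{pro:SR_complete_inc}, whereas the paper exhibits an explicit compatible orientation with a directed $3$-cycle on the last three vertices; both are routine checks of established characterisations.
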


\begin{proof}
Let $c$ be as in the statement of the proposition. We first show that $c$ is SR. For this, consider the orientation $\OO$ of $K_n$ such that $n-2 \orighta n-1 \orighta n \orighta n-2$, $j \orighta k$ if $j \geq n-2$ and $k \leq n-3$, and the edges $\{k, k'\}$ for $k, k' \leq n-3$ are oriented in any arbitrary manner. In words, we construct a directed cycle on the last three vertices, orient all other edges incident to one of those vertices as outgoing (away from the cycle), and choose an arbitrary direction for the remaining edges. By construction we have $\In{j} = 1$ for any $j \geq n-2$, and since $c_k = n-1 \geq \In{k}$ for any $k \leq n-3$, the orientation $\OO$ is compatible with $c$, and $c$ is SR by Theorem~\ref{thm:SR_general}.

We now show that $c$ is not $(n-3)$-SR. Seeking contradiction, assume that $c$ is $(n-3)$-SR. This implies that $c$ can be reached from $\maxc$ through a sequence of grain additions and topplings, where the topplings are stochastic for the first $(n-3)$ vertices, and deterministic for the last $3$. But since each of the last $3$ vertices has one grain of sand, they each must topple at least once in this sequence. Consider the final topplings of each of these, and without loss of generality assume that this occurs in order $n-2, n-1, n$. Since the topplings of $(n-1)$ and $n$ both send a grain to $(n-2)$, and since $(n-2)$ doesn't subsequently topple (we considered the final topplings in the sequence leading to $c$), then $(n-2)$ must have at least two grains in the final stable configuration reached, i.e. $c_{n-2} \geq 2$. This gives the desired contradiction ($c_{n-2} = 1$ by definition), and the proposition is proved.
\end{proof}

\begin{remark}\label{rem:ess_cycle}
The proof of Proposition~\ref{pro:part_sto_non_sr} indicates that one thing stopping a configuration from being partially SR is a $(1, 1, 1)$ cycle on vertices which topple fully. In fact, it is a little bit more complex than that. For example, for $n=4$, the configurations which are SR, but not $1$-SR, are $(3, 1, 1, 1)$ (the counter-example from Proposition~\ref{pro:part_sto_non_sr}), and $(0, 2, 2, 2)$. The orientation compatible with the first is as in the proof above, while the orientation compatible with the second is $2 \orighta 3 \orighta 4 \orighta 2$, and $1 \orighta j$ for $j \in \{2, 3, 4\}$. Note that this configuration is minimal recurrent, so this orientation is unique up to a flipping of the $(2, 3, 4)$-cycle.
\end{remark}

The above suggests that what stops a configuration from being partially SR is in fact precisely a ``forced'' cycle on vertices which topple fully. We make this observation precise in the following technical lemma. 

\begin{lemma}\label{lem:forced_cycle}
Let $n \geq 3$ and $\ell \geq 1$. We define $K_n^{(\ell)}$ to be the complete graph on $[n]_0$ where for every $k \in [n]$, the edge $\{k, 0\}$ connecting $k$ to the sink has multiplicity $\ell$. Note that if $\ell = 1$ this is just the usual complete graph $K_n^0$. 
Let $c \in \StoRec{K_n^{(\ell)}}$ be SR on this graph. Let $k_1, k_2, k_3 \in [n]$ be three \emph{distinct} vertices in $[n]$. 
Suppose that there exists an orientation $\OO$ of $K_n$, compatible with $c$, such that $k_1 \orighta k_2$, $k_2 \orighta k_3$, and $k_1 \orighta k_3$. Then there exists a sequence of grain additions and topplings leading from $\maxc$ to $c$ such that the vertices $k_1, k_2, k_3$ always topple deterministically. Moreover, the final topplings of these three vertices are in the order $k_3, k_2, k_1$, possibly with $k_3$ never toppling.
\end{lemma}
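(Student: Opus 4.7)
The plan is to construct the required sequence by starting from an existence result and then reshaping it under the guidance of the orientation $\OO$. Since $c \in \StoRec{K_n^{(\ell)}}$, Theorem~\ref{thm:SR_general}(i) applied to $K_n^{(\ell)}$ yields grain additions $d$ and a sequence of stochastic topplings from $\maxc + d$ to $c$ realized with positive probability. I would then modify this raw sequence in two ways: replace each stochastic toppling at $k_1$, $k_2$ or $k_3$ by its deterministic version, and arrange the final topplings of these three vertices to occur in the order $k_3, k_2, k_1$.

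The conversion to deterministic topplings at $k_1, k_2, k_3$ is justified because the all-coins-success outcome of a stochastic toppling has positive probability, so the modified sequence still sits on a positive-probability path of the SSM on $K_n^{(\ell)}$. To realize the required ordering, I would work backwards from $c$ and place three deterministic topplings at $k_1$, then $k_2$, then $k_3$ at the end (read in reverse time), yielding an intermediate configuration $c^{\star}$ that the earlier part of the sequence must reach. The orientation compatibility $c_{k_j} \geq \In{k_j}$ together with the transitive structure $k_1 \orighta k_2, k_1 \orighta k_3, k_2 \orighta k_3$ guarantees that this backward accounting is consistent: the unanswered grain that $k_1$'s final toppling sends to $k_2$ (since $k_2$ does not topple afterwards) corresponds exactly to the edge $k_1 \orighta k_2$, and similarly for the other two oriented edges among the three. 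The case where $k_3$ never topples corresponds to $c^{\star}_{k_3}$ being already stable without any $k_3$-toppling, which is possible precisely when no out-edge of $k_3$ in $\OO$ is strictly needed.

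The main obstacle is showing that the intermediate configuration $c^{\star}$ is reachable from $\maxc$ by grain additions and topplings in which $k_1, k_2, k_3$ still only topple deterministically. Here the transitive (rather than cyclic) structure of $\OO$ on $\{k_1, k_2, k_3\}$ is decisive: a directed $3$-cycle on these vertices would impose conflicting ``last toppler'' constraints, whereas the transitive orientation designates $k_1$ unambiguously as the latest and $k_3$ as the earliest, admitting the unique linear extension $(k_3, k_2, k_1)$. I would finalise the argument by invoking the order-independence of the SSM stabilisation given the coin-flip outcomes, to permute topplings of vertices outside $\{k_1, k_2, k_3\}$ and to absorb any earlier deterministic topplings of these three into a prefix consistent with the required final ordering, thereby producing the sequence claimed by the lemma.
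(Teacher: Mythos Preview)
Your proposal has a genuine gap at the step where you ``replace each stochastic toppling at $k_1$, $k_2$ or $k_3$ by its deterministic version''. It is true that the deterministic outcome of a stochastic toppling has positive probability, but swapping it in mid-sequence changes the configuration from that point onward: $k_j$ now has fewer grains and its neighbours have more, so the remaining topplings may no longer be legal, and in any case the sequence will not terminate at $c$. The positive-probability argument only tells you that \emph{some} stable configuration is reached along a valid SSM path; it does not tell you that this configuration is $c$. The same issue undermines your backward construction of $c^{\star}$: you correctly identify that reachability of $c^{\star}$ is ``the main obstacle'', but the order-independence result you invoke fixes the coin-flip outcomes and only permutes the order of topplings of unstable vertices. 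It does not allow you to change the content of a toppling, nor to force the \emph{last} topplings of three specified vertices to occur in a prescribed order while still landing on the same endpoint.

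The paper's proof avoids this by not modifying an existing sequence at all. It first reduces to minimal SR states, then argues by induction on $n$: a vertex $m \notin \{k_1,k_2,k_3\}$ is removed, and the key observation is that $\dgr{i}^{K_n^{(\ell)}} = \dgr{i}^{K_{n-1}^{(\ell+1)}}$ for $i \in [n-1]$, so a legal sequence on $K_{n-1}^{(\ell+1)}$ (obtained from the induction hypothesis, with $k_1,k_2,k_3$ already toppling deterministically in the right order) can be lifted verbatim to $K_n^{(\ell)}$ by redirecting one sink-bound grain per toppling to $m$. A single final stochastic toppling at $m$ then recovers $c$. This inductive reduction, and in particular the passage to higher sink-multiplicity $\ell+1$, is the missing structural idea in your approach.
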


\begin{proof}
First, note that it is sufficient to show the lemma for minimal SR states $c$. Indeed, any recurrent configuration can be reached from a certain minimal recurrent configuration through grain additions, and putting extra grain additions at the end of the additions and topplings sequence doesn't change the conclusion of the lemma. Thus we may assume that an orientation $\OO$ compatible with $c$ satisfies $c_i = \In{i}$ at all vertices $i \in [n]$.

We proceed by induction on $n \geq 3$. First, let us show that the statement holds for $n=3$. Fix $\ell \geq 1$ and let $c \in \StoRec{K_3^{(\ell)}}$ be minimal recurrent. Without loss of generality, we choose $k_1 = 1, k_2 = 2, k_3=3$ for the statement. Then, by definition of $\OO$, we have $c_1 = 0, c_2=1, c_3=2$. It is straightforward to check that $c$ is DR (e.g.\ by applying Dhar's burning algorithm). 
Therefore, there exists a sequence of grain additions and full topplings leading from $\maxc$ to $c$ in $K_3^{(\ell)}$. Moreover, since $c_1 = 0$, vertex $1$ must be the last vertex to topple in this sequence, and since $c_2 = 1$, $2$ must be the penultimate vertex to topple (it receives one grain from the subsequent toppling of $1$, and can't receive any other grains). This proves the statement in the case $n=3$.

Now fix some $n \geq 4$, $\ell \geq 1$, and assume that the statement holds for $(n-1)$ and all $\ell' \geq 1$. Let $c \in \StoRec{K_n^{(\ell)}}$ be minimal recurrent. Let $k_1, k_2, k_3$ and the orientation $\OO$ of $K_n$ be as in the statement of the lemma, with $c_i = \In{i}$ for all $i \in [n]$. Choose some $m \notin \{k_1, k_2, k_3\}$, and to simplify notation, assume that we can take $m=n$. Let $S \subseteq [n-1]$ be the set of vertices $i$ such that $n \orighta i$, and $T := [n-1] \setminus S$. Note that by definition $c_n = \In{n} = \vert T \vert = n-1-\vert S \vert$. We define a configuration $c' = (c'_1, \ldots, c'_{n-1}) \in \Config{K_{n-1}^{(\ell + 1)}}$ by 
\beq\label{eq:tech_lemma_def_c'}
c'_i := \begin{cases}
c_i - 1 & \text{if } i \in S, \\
c_i & \text{otherwise}.\\
\end{cases} 
\eeq

Let $\OO'$ be the orientation on $K_{n-1}$ obtained by simply removing all edges incident to $n$ in $\OO$. By definition of $S$, we have:
\beq\label{eq:tech_lemma_comp}
\mathrm{in}_i^{\OO'} = \begin{cases}
\In{i} -1 & \text{if } i \in S, \\
\In{i} & \text{otherwise}.\\
\end{cases} 
\eeq
But since $\In{i} = c_i$ for all $i \in [n]$ by assumption, Equations~\eqref{eq:tech_lemma_def_c'} and \eqref{eq:tech_lemma_comp} immediately imply that $ \mathrm{in}_i^{\OO'} = c'_i$ for all $i \in [n-1]$, and so $c'$ is minimal recurrent on $K_{n-1}^{(\ell + 1)}$ by Theorem~\ref{thm:char_minrec_general_orientation}. 

Now note that $k_1, k_2, k_3$ and the orientation $\OO'$ still satisfy the conditions of Lemma~\ref{lem:forced_cycle} applied to $c'$ and $K_{n-1}^{(\ell + 1)}$. We may therefore apply the induction hypothesis, meaning that $c'$ can be reached from $\maxc$ (on $K_{n-1}^{(\ell + 1)}$) through a series of grain additions and topplings, in which $k_1$, $k_2$ and $k_3$ always topple deterministically, and their final topplings are in the order $k_3, k_2, k_1$. We will write $\mathrm{SEQ}'$ to denote this sequence.

Now note that, by definition of $K_n^{(\ell)}$, for any $i \in [n-1]$, we have $\dgr{i}^{K_n^{(\ell)}} = \dgr{i}^{K_{n-1}^{(\ell + 1)}}$ (the extra edge to the sink in the latter cancelling out the edge to $n$ in the former). Therefore, we can ``copy'' the above sequence $\mathrm{SEQ}'$ of grain additions and topplings into a sequence $\mathrm{SEQ}$ from $\maxc$ on $K_n^{(\ell)}$, as follows. Grain additions are unchanged, as are any topplings that send at most $\ell$ grains to the sink. Then, if $\ell + 1$ vertices are sent to the sink in a single toppling in the sequence $\mathrm{SEQ}'$, in the ``copy'' sequence $\mathrm{SEQ}$ we send $\ell$ grains to the sink, and one grain to vertex $n$. With this construction, $\mathrm{SEQ}$ is a legal sequence of grain additions and topplings on $K_n^{(\ell)}$, and by construction we reach a configuration $c'' \in \Config{K_n^{(\ell)}}$ such that $c''_i = c'_i$ for all $i \in [n-1]$ (grain movements between vertices of $[n-1]$ are unchanged). Note also that any deterministic topplings in $\mathrm{SEQ}'$ translate to deterministic topplings in $\mathrm{SEQ}$. Moreover, the order of topplings is unchanged, so that in $\mathrm{SEQ}$ the final topplings of $k_1$, $k_2$ and $k_3$ are still in the order $k_3, k_2, k_1$.

Depending on the value of $c''_n$, we either add grains to vertex $n$, or repeatedly send a single grain from $n$ to the sink (this is a stochastic toppling that has positive probability), until $c''_n = n - 1 + \ell = \dgr{n}^{K_n^{(\ell)}}$.
We then add one final toppling to our sequence by toppling vertex $n$ stochastically, sending $\ell$ grains to the sink, and one grain to each vertex in $S$. This means that the number of grains remaining at $n$ after this toppling is exactly $(n - 1 + \ell) - (\ell + \vert S \vert) = \vert T \vert = c_n$. Moreover, by construction, if $i \in S$, then $i$ now has $1 + c''_i = 1 + c'_i = c_i$ grains, and if $i \in T$, then $i$ now has $c''_i = c'_i = c_i$ grains. In other words, after this final toppling we have exactly reached the configuration $c$, and the series of grain additions and topplings applied satisfies the conclusions of Lemma~\ref{lem:forced_cycle}. This completes the proof.
\end{proof}

In fact, we only need to apply this in the case $\ell = 1$, but as we saw the general case was necessary for the proof since the reduction from $n$ to $(n-1)$ forced the increase from $\ell$ to $(\ell + 1)$. We are now equipped to state our main result of this section, namely that $(n-2)$ stochastically-toppling vertices are sufficient to reach all SR configurations.

\begin{theorem}\label{thm:par_sto_rec_all_sto_rec}
Let $n \geq 3$. We have
\beq\label{eq:par_sto_rec_all_sto_rec}
\PSR{n-2} = \StoRecn.
\eeq
\end{theorem}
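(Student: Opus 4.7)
The inclusion $\PSR{n-2} \subseteq \StoRecn$ is immediate from Proposition~\ref{pro:inc_part_storec}, so I focus on the reverse inclusion. Let $c \in \StoRecn$. If $c \in \DetRecn$, then $c \in \PSR{0} \subseteq \PSR{n-2}$ by Proposition~\ref{pro:inc_part_storec}. So assume $c \in \StoRecn \setminus \DetRecn$. By Theorem~\ref{thm:SR_general} a compatible orientation $\OO$ of $K_n$ exists, and by Theorem~\ref{thm:DR_general} it must contain a directed cycle. The plan is to apply Lemma~\ref{lem:forced_cycle} (with $\ell = 1$) to a transitive triangle of $\OO$ that contains both designated deterministic vertices $n-1$ and $n$.

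Granting such a triangle $\{k_1, k_2, k_3\}$ with $\{n-1, n\} \subseteq \{k_1, k_2, k_3\}$, the lemma yields a sequence of grain additions and topplings from $\maxc$ to $c$ in which $k_1, k_2, k_3$ all topple deterministically and the remaining vertices topple stochastically. The third triangle vertex $w := \{k_1, k_2, k_3\} \setminus \{n-1, n\}$ lies in $\{1, \ldots, n-2\}$, so $w$ is a stochastic vertex in $\PSR{n-2}$. Since a deterministic toppling is precisely the outcome of a stochastic toppling in which every edge fires (an event of positive probability $p^n$ at each step), I would reinterpret each deterministic toppling of $w$ in the lemma's sequence as a stochastic all-fire toppling, producing exactly the same configuration change. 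The modified sequence is a valid sequence in $\PSR{n-2}$ reaching $c$ with positive probability, so $c \in \PSR{n-2}$.

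The key task is then establishing the existence of a transitive triangle $\{n-1, n, w\}$ in some compatible orientation. For $n \geq 4$ I would proceed as follows. Given the direction of the edge $\{n-1, n\}$ in $\OO$, the remaining two edges of the triangle $\{n-1, n, w\}$ can be oriented in four ways, three of which yield a transitive triangle and one a directed $3$-cycle. If $\OO$ already yields a transitive triangle for some $w$, we are done. Otherwise every triple $\{n-1, n, w\}$ with $w \in \{1, \ldots, n-2\}$ is a directed $3$-cycle, which by inspection forces (up to swapping the roles of $n-1$ and $n$) the structure $n-1 \orighta n, n \orighta w, w \orighta n-1$ for every $w$. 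In this case, cycle-flipping the $3$-cycle on $\{n-1, n, w_0\}$ for any fixed $w_0$ produces a score-equivalent orientation (still compatible with $c$ by Proposition~\ref{pro:equiv_orient_cycle_flip}) in which every triple $\{n-1, n, w'\}$ with $w' \neq w_0$ becomes a transitive triangle of the required form, and such $w'$ exists since $n \geq 4$.

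The case $n = 3$ must be handled separately: the unique compatible orientation of an SR-but-not-DR configuration on $K_3$ is a $3$-cycle, containing no transitive triangle. A direct check via Theorem~\ref{thm:SR_complete_char} and Corollary~\ref{cor:detrec_comp_burn} shows that the only SR non-DR state (up to permutation) is $(1, 1, 1)$. I would handle this by an explicit construction in $\PSR{1}$: from $\maxc = (2, 2, 2)$, add a grain at vertex~$2$ and topple vertex~$2$ deterministically to obtain $(3, 0, 3)$; topple vertex~$1$ stochastically firing only the sink edge to obtain $(2, 0, 3)$; topple vertex~$3$ deterministically to obtain $(3, 1, 0)$; finally topple vertex~$1$ stochastically firing the sink edge and the edge to vertex~$3$ to obtain $(1, 1, 1)$. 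Each step happens with positive probability, establishing $(1, 1, 1) \in \PSR{1}$. The main obstacle in the plan is the cycle-flip structural argument for $n \geq 4$: verifying that the sketched case analysis exhausts the possible obstructions and that one cycle-flip always suffices requires care.
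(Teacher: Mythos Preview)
Your proposal is correct and complete; the case analysis you worry about in the last paragraph does go through exactly as sketched (once the direction of the edge $\{n-1,n\}$ is fixed, there is a unique way for each triangle $\{n-1,n,w\}$ to be a directed $3$-cycle, and flipping one such cycle reverses the $\{n-1,n\}$ edge, making every other triangle $\{n-1,n,w'\}$ transitive). Your route differs from the paper's in an interesting way. The paper always works with the fixed triangle $\{n-2,n-1,n\}$: if this triangle is transitive in some compatible orientation, Lemma~\ref{lem:forced_cycle} applies directly; if it is forced to be a directed cycle, the paper flips one edge of that cycle to pass to a neighbouring configuration $c' = c + \alpha_n - \alpha_{n-2}$, applies Lemma~\ref{lem:forced_cycle} to $c'$, and then modifies the very last toppling of vertex $n-2$ (now stochastic) to withhold the grain to $n$, converting $c'$ back to $c$. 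You instead let the third triangle vertex $w$ vary over $\{1,\ldots,n-2\}$ and use a cycle-flip to stay within the same configuration $c$, which has the advantage that Lemma~\ref{lem:forced_cycle} is applied directly to $c$ with no post-hoc surgery on the toppling sequence. The price you pay is that your argument needs $n \geq 4$ (so that a second vertex $w' \neq w_0$ exists after the flip), forcing the separate explicit treatment of $n=3$; the paper's edge-flip-to-$c'$ trick works uniformly for all $n \geq 3$.
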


\begin{proof}
We know from Proposition~\ref{pro:inc_part_storec} that $\PSR{n-2} \subseteq \StoRecn$. For the converse, let $c \in \StoRecn$ be SR. It is once again sufficient to consider the case where $c$ is minimal recurrent. Therefore, by Theorem~\ref{thm:char_minrec_general_orientation}, there exists an orientation $\OO$ of $K_n$ satisfying $\In{k} = c_k$ for all vertices $k$. Suppose that there exists such an orientation where the last three vertices $n-2$, $n-1$, and $n$ do not form a directed cycle. Then from Lemma~\ref{lem:forced_cycle}, we can find a series of grain additions and topplings leading from $\maxc$ to $c$ such that the last three vertices always topple deterministically, implying that $c \in \PSR{n-3} \subseteq \PSR{n-2}$.

We may therefore assume that any orientation compatible with $c$ has a directed cycle on the last three vertices. Without loss of generality, assume that $n-2 \orighta n-1 \orighta n \orighta n-2$. Flipping the edge $n \orighta n-2$ of $\OO$ yields an orientation $\OO'$, and the configuration compatible with $\OO'$ is $c' = c + \alpha_n - \alpha_{n-2}$. But the orientation $\OO'$ now satisfies the conditions of Lemma~\ref{lem:forced_cycle}. Therefore, there exists a series of grain additions and topplings leading from $\maxc$ to $c'$ such that the vertices $(n-2)$, $(n-1)$ and $n$ always topple deterministically. Moreover, the last three topplings of those vertices are $n, n-1, n-2$ in that order. 

If we change the final toppling of $(n-2)$ so that it now sends grains to all vertices $k < n$, but doesn't send a grain to $n$, this changes the final configuration from $c'$ to $c$. Note that any toppling occurring after that of $(n-2)$ can still occur, since the only change is $n$ no longer receiving a grain, and $n$'s last toppling is before that of $(n-2)$. As such, we have constructed a sequence of grain additions and topplings leading from $\maxc$ to $c$ in which vertices $(n-1)$ and $n$ always topple deterministically (since this was the case in the sequence leading to $c'$, and we left those topplings unchanged). This means exactly that $c \in \PSR{n-2}$, and the theorem is proved.
\end{proof}

\begin{remark}\label{rem:sto_strength}
The proof of Theorem~\ref{thm:par_sto_rec_all_sto_rec} seems to suggest that SR configurations don't require many stochastic topplings to be reached. Indeed, in the proof we only required one stochastic toppling on the ``forced'' cycle, and in fact this toppling in question was ``almost deterministic'', since it kept only one grain at the toppling vertex. It would be interesting to formalise this result by stating how many non-deterministic topplings are needed in general to reach a SR configuration from $\maxc$, and how close these topplings are to being deterministic.
\end{remark}


\section{Conclusion and future perspectives}\label{sec:conc}

In this paper, we have considered a stochastic variant of the ASM, called SSM, in which toppling vertices flip a biased coin for each neighbour and decide with probability $p$ to send a grain to that neighbour, and with probability $(1-p)$ to keep the grain (here $p \in (0, 1)$ is fixed). We have studied the SSM on complete graphs from a combinatorial point of view, focusing on characterisations of the model's recurrent states (called SR states). To our knowledge, this is the first such combinatorial study of a stochastic sandpile model.

We exhibited a stochastic version of the burning algorithm for complete graphs that establishes in log-linear time whether a given state is SR or not (Theorem~\ref{thm:sto_burning}). We then showed that the set of SR states is the set of integer lattice points of the convex polytope of DR states (Theorem~\ref{thm:SR_DRPoly}). In other words, any SR state can be written as a convex sum of DR states. Finally, we considered a family of partially stochastic sandpile models on complete graphs, in which some vertices always topple deterministically according to the ASM rules, while others topple stochastically according to the SSM rules. We showed that in general the set of recurrent states for these models are distinct from those of both the ASM and SSM (Propositions~\ref{pro:part_sto_non_det} and \ref{pro:part_sto_non_sr}), and that if at most two vertices topple deterministically, we get all SR states (Theorem~\ref{thm:par_sto_rec_all_sto_rec}).

There are a number of open questions and directions to be considered by future research. The first is that of more general stochastic burning algorithms, i.e.\ algorithms that successively burn (remove) vertices until either all vertices are burned (in which case the starting configuration is SR),  or we reach a forbidden subconfiguration where no more vertices can be burned (in which case the starting configuration is not SR). Remark~\ref{rem:sto_burning_only_complete} explains why two of the more natural burning criteria don't work in the general case.

Another future research direction that seems natural is to study the SSM on other graph families. Note that the author has in fact considered the SSM on wheel graphs in~\cite{SelWheel}, but in this case the difference with the ASM is minimal (there is only one additional recurrent state). It would be interesting to consider for instance the case of complete bipartite graphs (with or without a dominating sink), complete split graphs, etc., as has been done for the ASM.

Another open question is the existence of an explicit enumeration formula for the set of SR states $\StoRecn$. Such a formula is given in terms of permutohedron lattice points enumeration in~\cite[Theorem~5.1]{AW}, but it is rather complicated. One possible approach to this would be to use the Tutte-like deletion-contraction relationship given for the so-called \emph{lacking polynomial} in \cite[Theorem~3.9]{CMS} (the lacking polynomial is equivalent to the level polynomial for the SSM, up to a change of variable). One would hope that this could lead to a reasonably simple recursive formula, but so far this approach has not been successful. If no such explicit enumeration formula can be found, can asymptotic estimates be given?

Finally, it would be interesting to study the partially stochastic sandpile models further. Table~\ref{table:par_ssm} below gives the first few values of $\vert \PSR{1} \vert$ for $n = 1, \ldots, 6$. This sequence is not known in the OEIS~\cite{OEIS}. We observe that in general the numbers of $1$-SR states appear a fair amount closer to the numbers of SR states than they are to those of DR states, albeit our data is quite limited (these numbers are very slow to compute). This is somehow consistent with Remark~\ref{rem:sto_strength} that SR states in general don't require many stochastic topplings to be reached, so a single vertex toppling stochastically should be enough to reach most of them. It would be interesting to quantify this observation in some way, perhaps in terms of asymptotic behaviour of these numbers.

\begin{table}[ht]
\centering
\begin{tabular}{c|c|c|c}
$n$ & $\left\vert \DetRecn \right\vert$ & $ \left\vert \PSR{1} \right\vert $ & $\left\vert\StoRecn \right\vert $ \\ [8pt]
\hline
1 & 1 & 1 & 1 \\
2 & 3 & 3 & 3 \\
3 & 16 & 17 & 17 \\
4 & 125 & 142 & 144 \\
5 & 1296 & 1563 & 1623 \\
6 & 16807 & 21326 & 22804\\
\end{tabular}
\caption{The numbers of DR, $1$-SR, and SR states on $K_n$, for $n$ ranging from $1$ to $6$.\label{table:par_ssm}}
\end{table}

\subsection*{Acknowledgements}

The research leading to these results is partially supported by the National Natural Science Foundation of China, grant number 12101505, and by the Research Development Fund of Xi'an Jiaotong-Liverpool University, grant number RDF-22-01-089.


\end{document}